\documentclass[11pt,a4paper]{article}
\usepackage{amsmath}
\usepackage{amssymb}
\usepackage{amsthm}
\usepackage{amsbsy}
\usepackage{mathrsfs}
\usepackage{tipa}
\usepackage{txfonts}
\usepackage{amsfonts}
\usepackage{graphicx}
\usepackage[all]{xy}
\usepackage{enumitem}
\usepackage{mathtools}
\usepackage{tikz}
\setenumerate[1]{itemsep=1pt,partopsep=0pt,parsep=\parskip,topsep=6pt}
\setitemize[1]{itemsep=1pt,partopsep=0pt,parsep=\parskip,topsep=6pt}
\setdescription{itemsep=1pt,partopsep=0pt,parsep=\parskip,topsep=6pt}
\input xypic
\newtheorem{thm}{Theorem}[section]
\newtheorem{cor}[thm]{Corollary}
\newtheorem{lem}[thm]{Lemma}

\newtheorem{que}[thm]{Question}
\newtheorem{exm}[thm]{Example}
\newtheorem{prop}[thm]{Proposition}
\newtheorem{defn}[thm]{Definition}
\newtheorem{rem}[thm]{Remark}
\newtheorem{nota}[thm]{Notation}
\usetikzlibrary{arrows.meta, calc, tikzmark}
\begin{document}

\title{\Large \bf  Virtually Gorenstein Artin algebras are weakly Gorenstein
\footnotetext{
*Corresponding author.  E-mail address: sdwg001@163.com.\\
This research was supported by the Natural Science Foundation of Hunan Province (No. 2023JJ30561).
 }}

 \vskip 0.5cm
\author{{\normalsize  Weiqing Li }\\
{\small College of Mathematics and Statistics, Jishou University, Jishou 416000,  P. R. China}\date{}}
 \maketitle
 \vskip-1cm

\begin{abstract}  We prove that  all (possibly  infinitely generated) semi-Gorenstein-projective modules over a virtually Gorenstein Artin algebra  are Gorenstein projective.
We also establish a new criterion for determining  the projective (injective) dimensions    of modules over virtually
Gorenstein Artin algebras.
This enables us  to show  the  validities of   the Auslander-Gorenstein Conjecture, the (Strong) Nakayama Conjecture and Tachikawa's First Conjecture for virtually
Gorenstein Artin algebras.  Finally, we prove that  the    Auslander-Reiten  Conjecture holds for virtually Gorenstein  Artin algebras  of finite CM-type.
\end{abstract}

\def\s{\stackrel}
\def\Longrightarrow{{\longrightarrow}}
\def\A{\mathcal{A}}
\def\B{\mathcal{B}}
\def\C{\mathcal{C}}
\def\D{\mathcal{D}}
\def\T{\mathcal{T}}
\def\R{\mathcal{R}}
\def\P{\mathcal{P}}
\def\S{\mathcal{S}}
\def\H{\mathcal{H}}
\def\U{\mathscr{U}}
\def\V{\mathscr{V}}
\def\M{\mathscr{M}}
\def\N{\mathcal{N}}
\def\W{\mathscr{W}}
\def\X{\mathscr{X}}
\def\Y{\mathscr{Y}}
\def\Z{\mathcal {Z}}
\def\I{\mathcal {I}}
\def\add{\mbox{add}}
\def\Aut{\mbox{Aut}}
\def\coker{\mbox{coker}}
\def\deg{\mbox{deg}}
\def\diag{\mbox{diag}}
\def\dim{\mbox{dim}}
\def\End{\mbox{End}}
\def\Ext{\mbox{Ext}}
\def\Hom{\mbox{Hom}}
\def\Gr{\mbox{Gr}}
\def\id{\mbox{id}}
\def\Im{\mbox{Im}}
\def\ind{\mbox{ind}}
\def\mod{\mbox{mod}}
\def\mul{\multiput}
\def\c{\circ}
\def \text{\mbox}

\hyphenation{ap-pro-xi-ma-tion}

\textbf{Key words:}    Virtually
Gorenstein Artin algebra;  weakly Gorenstein Artin algebra; Auslander-Gorenstein Conjecture;   (Strong) Nakayama Conjecture; Auslander-Reiten  Conjecture.

\medskip

\textbf{2020 Mathematics Subject Classification:}  16G10; 18G25.

\section{ Introduction}

Throughout this paper, $\Lambda$ is an Artin algebra and
all $\Lambda$-modules are unitary  right  $\Lambda$-modules. We view left $\Lambda$-modules as right  ones over
the opposite ring $\Lambda^{{\rm op}}$. Let
 Mod-$\Lambda$ and mod-$\Lambda$
  denote the categories of all right
 $\Lambda$-modules and all finitely generated right
$\Lambda$-modules, respectively.
The symbols  id$_{\Lambda}$$(M)$ and pd$_{\Lambda}$$(M)$  stand  for the   injective and projective   dimension of
  a  right $\Lambda$-module $M$, respectively.

The class of Gorenstein projective  modules plays a central role in the theory of Gorenstein homological algebra.

\begin{defn} \label{defn1} \rm{  (\cite{eno9}).
 A  {\it  complete projective resolution} is an exact sequence of projective
 $\Lambda$-modules,  $$\cdots  \rightarrow P_{1} \rightarrow P_{0}\rightarrow
P_{-1}\rightarrow P_{-2}\rightarrow \cdots,  $$
 such that Hom$_{\Lambda}$($-$, $Q$) leaves the sequence exact whenever $Q$ is a projective   $\Lambda$-module; the module $M$ = im($P_{0}\rightarrow
P_{-1}$) is then  said to be {\it Gorenstein  projective}.

{\it Gorenstein  injective }  modules are defined dually.
 We use $\mathscr{GP}$ and $\mathscr{GI}$    to denote, respectively,  the classes of all Gorenstein  projective and  injective
   (right)  $\Lambda$-modules. }\end{defn}

\begin{defn} \label{defn100}  \rm{  (\cite{hol}). Let $M$ be a    $\Lambda$-module.  We say that  $M$ has {\it Gorenstein projective
dimension} at most  $n$, and we write  Gpd$_{\Lambda}$($M$) $\leq n$, if there exists an exact sequence
of  $\Lambda$-modules $ 0 \rightarrow G_n\rightarrow
\cdots \rightarrow G_0\rightarrow M\rightarrow 0$ where each $G_{i}$ is Gorenstein projective. If there is no such $n$, set   Gpd$_{\Lambda}$($M$)
$=\infty$.

The {\it Gorenstein injective
dimension},  Gid$_{\Lambda}$($M$), is     defined similarly.}\end{defn}

\begin{defn} \label{defn81} \rm{   (\cite{rz1}).
 A right $\Lambda$-module
$M$   is called   {\it semi-Gorenstein-projective} if $M\in$ $^{\perp_{\infty}}{\Lambda}$, i.e.,    Ext$ _{\Lambda}^{i}(M, \Lambda)=0$  for all  $i \geq 1$.
 }\end{defn}

It is clear that $\mathscr{GP} \subseteq$  $^{\perp_{\infty}}{\Lambda}$; but we learn from   \cite{rz1}
that   there exist  semi-Gorenstein-projective modules which are not Gorenstein projective.

\begin{defn} \label{defn82} \rm{   (\cite{rz1}).
 An Artin algebra $\Lambda$  is said to be   {\it right  weakly Gorenstein} provided that $^{\perp_{\infty}}{\Lambda} $ $\cap$ mod-$\Lambda \subseteq$ $\mathscr{GP}$.
 If both  $\Lambda$ and $\Lambda^{{\rm op}}$ are right  weakly Gorenstein,   then $\Lambda$  is called {\it   weakly Gorenstein}. }\end{defn}

Recall that an artin algebra  $\Lambda$  is said to be of { \it finite CM-type} \cite{bel1}  if there are only finitely many, up to isomorphisms,
indecomposable finitely generated Gorenstein projective modules.
In 2020, Ringel and  Zhang \cite{rz1} posed the following problem.

\begin{que} \label{que1} { \rm   (\cite[Question 9.2]{rz1})}. If $\Lambda$ is  an Artin algebra of finite CM-type, is $\Lambda$  right  weakly Gorenstein?
There is a weaker question:
is  an Artin algebra $\Lambda$  right  weakly Gorenstein in case all  finitely generated right Gorenstein  projective modules are   projective?\end{que}

The first main purpose of this paper is to restrict our attention to the aforementioned question over virtually Gorenstein  Artin algebras, which is a natural generalization of the class of Gorenstein algebras.

\begin{defn}\label{defn21} { \rm   (\cite[Definition 8.1]{ber})}.   {\rm  An  Artin algebra $\Lambda$  is said to be}   virtually Gorenstein   {\rm provided that}   $\mathscr{GP}^\perp$ = $^\perp\mathscr{GI}$.
  \end{defn}

It is well-known that   an  Artin algebra $\Lambda$ is virtually Gorenstein  if and only if $\Lambda^{{\rm op}}$ is  virtually Gorenstein (see \cite[Theorem 8.7]{bel}).
In this note,  we prove the following theorem (see Theorem \ref{thm101}), which is closely related to
  Question \ref{que1}.
 \begin{thm}\label{thm81} Let   $\Lambda$  be  a virtually Gorenstein  Artin algebra. Then  $^{\perp_{\infty}}{\Lambda} $ $=$  $\mathscr{GP}$.
 In particular,  $\Lambda$ is weakly Gorenstein.
 \end{thm}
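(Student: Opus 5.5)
The approach is to combine the completeness of the Gorenstein projective cotorsion pair with the fact that $\mathscr{GP}$ is resolving. Only the inclusion $^{\perp_{\infty}}\Lambda\subseteq\mathscr{GP}$ needs proof, since $\mathscr{GP}\subseteq {}^{\perp_{\infty}}\Lambda$ always holds. For a virtually Gorenstein Artin algebra I will use, as standard input from \cite{bel,ber}, that $(\mathscr{GP},\mathscr{GP}^\perp)$ is a complete hereditary cotorsion pair in Mod-$\Lambda$. Its kernel is $\mathscr{GP}\cap\mathscr{GP}^\perp=\mathrm{Proj}$: a Gorenstein projective $G$ embeds as $0\to G\to P\to G''\to 0$ with $P$ projective and $G''\in\mathscr{GP}$, and this sequence splits when $G\in\mathscr{GP}^\perp$.

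Fix $M\in{}^{\perp_{\infty}}\Lambda$. By completeness there is a special $\mathscr{GP}^\perp$-preenvelope
$$0\to M\to Y\to G\to 0,\qquad Y\in\mathscr{GP}^\perp,\ G\in\mathscr{GP}.$$
Since both $M$ and $G$ lie in $^{\perp_{\infty}}\Lambda$, so does $Y$. Suppose we know:
$$(\ast)\qquad \mathscr{GP}^\perp\cap{}^{\perp_{\infty}}\Lambda=\mathrm{Proj}.$$
Then $Y$ is projective, so $M$ is the kernel of an epimorphism $Y\to G$ between Gorenstein projectives. Because $\mathscr{GP}$ is resolving and hence closed under kernels of epimorphisms, $M\in\mathscr{GP}$. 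The statement that $\Lambda$ is weakly Gorenstein then follows by restricting to finitely generated modules, and by applying the same argument to $\Lambda^{\rm op}$, which is again virtually Gorenstein by \cite[Theorem 8.7]{bel}.

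The main obstacle is $(\ast)$. Let $Y\in\mathscr{GP}^\perp\cap{}^{\perp_{\infty}}\Lambda$ and take a special $\mathscr{GP}$-precover
$$0\to K\to P_0\to Y\to 0,\qquad P_0\in\mathscr{GP},\ K\in\mathscr{GP}^\perp.$$
Here $P_0$ is an extension of $Y$ and $K$, both in $\mathscr{GP}^\perp$, so $P_0\in\mathscr{GP}\cap\mathscr{GP}^\perp$ is projective. Moreover $K\in{}^{\perp_{\infty}}\Lambda$, by dimension shifting and $\Ext^{1}_\Lambda(K,\Lambda)=0$ (the latter because $\Hom_\Lambda(P_0,\Lambda)\to\Hom_\Lambda(K,\Lambda)$ is onto). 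Iterating yields a projective resolution of $Y$ all of whose syzygies lie in $\mathscr{GP}^\perp\cap{}^{\perp_{\infty}}\Lambda$. To conclude, it suffices that $\Ext^1_\Lambda(Y,K)=0$, since then the sequence splits.

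My first attempt at this will use the identification $\mathscr{GP}^\perp={}^\perp\mathscr{GI}$. Every $K$ in this class should admit a (possibly infinite) "Gorenstein injective coresolution" whose cosyzygies stay in $^\perp\mathscr{GI}$. Combining this with dimension shifting in the second variable, I would try to reduce $\Ext^1_\Lambda(Y,K)$ to $\Ext^{n}_\Lambda(Y,X)$ with $X$ built from projective, hence injective-like over an Artin algebra, pieces. Two standard facts for Artin algebras should make this work: products of projectives are projective, and $\Lambda$ is pure-injective. Together with $Y\in{}^{\perp_{\infty}}\Lambda$, these give $\Ext^{\geq1}_\Lambda(Y,\mathrm{Proj})=0$, and this vanishing is the key input. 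If this route stalls, the fallback is to show instead that modules in $\mathscr{GP}^\perp$ lie in the smallest thick class generated by projectives, in the sense of Beligiannis' characterization of virtually Gorenstein algebras. Ext-orthogonality to $\mathrm{Proj}$ then directly forces $\Ext^1_\Lambda(Y,K)=0$.
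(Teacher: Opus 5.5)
Your reduction is correct. Given $(\ast)$, the special $\mathscr{GP}^{\perp}$-preenvelope $0\to M\to Y\to G\to 0$ makes $Y$ projective, and hence $M\in\mathscr{GP}$. This is exactly the dual of the paper's Lemma~\ref{lem82}. The gap is that $(\ast)$ is not an intermediate lemma: it is equivalent to the theorem. Indeed, if ${}^{\perp_{\infty}}\Lambda=\mathscr{GP}$, then $\mathscr{GP}^{\perp}\cap{}^{\perp_{\infty}}\Lambda=\mathscr{GP}\cap\mathscr{GP}^{\perp}={\bf Proj}(\Lambda)$. It is also the projective-side counterpart of the paper's key Lemma~\ref{lem81}. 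You leave it unproved, and neither of your routes closes it.

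\textbf{First route.} The coresolution of $K$ you describe is just an injective coresolution, because $\mathscr{GI}\cap{}^{\perp}\mathscr{GI}={\bf Inj}(\Lambda)$. Shifting along a coresolution only gives $\Ext^{i+1}_{\Lambda}(Y,K)\cong\Ext^{i}_{\Lambda}(Y,L)$ for $i\geq 1$. It expresses higher Ext groups of $K$ through cosyzygies, never $\Ext^{1}_{\Lambda}(Y,K)$ through higher ones. Your vanishing $\Ext^{\geq 1}_{\Lambda}(Y,{\bf Proj}(\Lambda))=0$ is true; it is what makes projectives ``injective-like'' relative to $Y$. But it is only an orthogonality statement. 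Resolving $K$ by projectives gives $\Ext^{1}_{\Lambda}(Y,K)\cong\Ext^{n+1}_{\Lambda}(Y,\Omega^{n}K)$, and this vanishes only once you know $\mathrm{pd}_{\Lambda}(Y)<\infty$. That finiteness is the actual missing statement; with it, $Y$ is projective at once.

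\textbf{Fallback.} This is false. The thick closure of ${\bf Proj}(\Lambda)$ in Mod-$\Lambda$ is the class of modules of finite projective dimension. Since $\mathscr{GP}^{\perp}$ contains every injective module, your claim would give $\mathrm{pd}_{\Lambda}D(\Lambda^{\rm op})<\infty$, i.e.\ $\mathrm{id}_{\Lambda^{\rm op}}(\Lambda^{\rm op})<\infty$. By Theorem~\ref{thm102}, $\Lambda$ would then be Gorenstein. So the claim fails for every virtually Gorenstein algebra that is not Gorenstein. Adding injectives to the generators does not rescue it, because $Y^{\perp_{\infty}}$ is not closed under kernels of epimorphisms.

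\textbf{What is missing.} You need a finiteness argument that genuinely uses that $\Lambda$ is an Artin algebra. The paper supplies it on the injective side.
\begin{enumerate}
\item Lemma~\ref{lem22} comes from \cite[Theorem 8.2(vi)]{bel} together with Theorem~\ref{thm1} and Lemma~\ref{lem21}. For each of the finitely many simple modules $S$, it gives a sequence $0\to S\to Q\to B\to 0$ in mod-$\Lambda$ with $B\in\mathscr{GP}$ and $Q\in\mathrm{add}(\mathrm{filt}(\mathscr{X}))$. Here $\mathscr{X}$ consists of syzygies of indecomposable injectives and cosyzygies of $\Lambda$.
\item For $M\in D(\Lambda^{\rm op})^{\perp_{\infty}}\cap\mathscr{GP}^{\perp}$, this yields a uniform $t$ with $\Ext^{t}_{\Lambda}(-,M)=0$ on mod-$\Lambda$. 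Hence $\mathrm{id}_{\Lambda}(M)<\infty$, and since $M\in{\bf Inj}(\Lambda)^{\perp_{\infty}}$, $M$ is injective (Lemma~\ref{lem81}).
\item Lemma~\ref{lem82} then gives $D(\Lambda^{\rm op})^{\perp_{\infty}}=\mathscr{GI}$.
\item Theorem~\ref{thm92}(2) transfers this to ${}^{\perp_{\infty}}\Lambda=\mathscr{GP}$, via $D$, \cite[Lemma 8.6(1)]{bel}, and closure of $\mathscr{GP}$ under pure submodules.
\end{enumerate}

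\textbf{How your setup could be completed.} You could dualize Lemma~\ref{lem81} directly.
\begin{enumerate}
\item Apply $D$ to the sequence of Lemma~\ref{lem22} for the left module $D(S)$. This gives $0\to A\to C\to S\to 0$ in mod-$\Lambda$ with $A\in\mathscr{GI}$ and $C$ a summand of a module finitely filtered by cosyzygies of projectives and syzygies of $D(\Lambda^{\rm op})$.
\item Since $Y\in{}^{\perp_{\infty}}\Lambda\cap{}^{\perp}\mathscr{GI}$, this gives $\Ext^{n}_{\Lambda}(Y,-)=0$ on mod-$\Lambda$ for $n\gg 0$.
\item Hence $\mathrm{Tor}^{\Lambda}_{n}(Y,-)=0$ for $n\gg 0$. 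So $Y$ has finite flat dimension, and over an Artin algebra therefore finite projective dimension.
\item Then $Y$ is projective by your orthogonality to ${\bf Proj}(\Lambda)$.
\end{enumerate}
That route would even bypass the purity transfer. As written, however, your proposal contains none of this and amounts to a reformulation of the theorem.
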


By virtue of Theorem  \ref{thm81} and Example \ref{exm1}, the class of virtually Gorenstein algebras forms a proper subclass within the class of weakly Gorenstein algebras.

Let
 $$ 0  \longrightarrow \Lambda_{\Lambda}
\longrightarrow
E_0 \longrightarrow E_1
\longrightarrow \cdots \longrightarrow E_i
\longrightarrow \cdots   $$ be a minimal  injective resolution of $\Lambda_{\Lambda}$.

In 1958,   Nakayama \cite{nak}   posed a conjecture, which by results
of M$\ddot{\rm u}$eller \cite{mue}   is equivalent to the following:
\\
\\
{\bf Nakayama Conjecture (NC) }\ \ {\it  Let   $\Lambda$  be    an Artin algebra. If    $E_i$ is projective for all $i \geq 0$, then  $\Lambda$ is self-injective.}
\\
\par
 Let $\Lambda$  be    an Artin $R$-algebra over a commutative  Artin ring $R$ with radical $J$.
 There is a  contravariant exact functor $D$: Mod-$\Lambda$  $\rightarrow$ Mod-$\Lambda^{{\rm op}}$
  which is given by  $D$ = Hom$_{R}$($-$, $\mathbb{E}$) where $\mathbb{E}$ is the injective envelope of $R/J$. Note that the functor $D$ induces a classical duality   between
  mod-$\Lambda$    and mod-$\Lambda^{{\rm op}}$.
 Tachikawa \cite{tac} gave the following conjecture which is equivalent to (NC).
 \\
\\
{\bf Tachikawa Conjecture (TC) }\ \ {\it  Let   $\Lambda$  be    an Artin algebra. }
\begin{itemize}
\item[$({\rm TC1})$]If   {\rm Ext}$_{\Lambda}^{i}(D(\Lambda^{{\rm op}}),  \Lambda_{\Lambda}) =  0$  for all  $i \geq 1$, then $\Lambda$ is self-injective.
\item[$({\rm TC2})$] Assume    $\Lambda$ is self-injective. If $M \in$  mod-$\Lambda$   and  {\rm Ext}$_{\Lambda}^{i}(M, M) =  0$  for all  $i \geq 1$, then $M$ is projective.\end{itemize}

In 1975, Auslander and Reiten \cite{ar0}  posed   another conjecture, of which (NC)
is a special case.
 \\
\\
{\bf Generalized Nakayama Conjecture (GNC)  }\ \ {\it  Let   $\Lambda$  be    an Artin algebra.
 If $S \in$  {\rm mod}-$\Lambda$  is simple then there exists some $i \geq 0$  such that  {\rm Ext}$_{\Lambda}^{i}(S, \Lambda)\neq 0$. (This is equivalent to the statement that,
 each indecomposable injective $\Lambda$-module occurs as   a direct summand of some $E_{i}$.)}
\\
\par
It is shown by Auslander and Reiten \cite{ar0} that  (GNC)  is equivalent to
the following conjecture.
 \\
\\
{\bf Auslander-Reiten  Conjecture (ARC) }\ \ {\it  Let   $\Lambda$  be    an Artin algebra.
 If $M \in$  {\rm mod}-$\Lambda$   and  {\rm Ext}$_{\Lambda}^{i}(M, M\oplus\Lambda) =  0$   for all  $i \geq 1$, then $M$ is projective.}
\\
\par
In 1990, Colby and Fuller \cite{cof}  posed   the following conjecture, of which (GNC)
is a special case.
\\
\\
{\bf Strong Nakayama Conjecture (SNC)  }\ \ {\it  Let   $\Lambda$  be    an Artin algebra.
 If $M \in$  {\rm mod}-$\Lambda$    and  {\rm Ext}$_{\Lambda}^{i}(M, \Lambda) =  0$  for all  $i \geq 0$, then $M$ is zero.}
\\
\par
Recall that an  Artin algebra is said to be  $Gorenstein$ provided that both {\rm id}$_{\Lambda}$($\Lambda$)
 and {\rm id}$_{\Lambda^{{\rm op}}}$($\Lambda^{{\rm op}}$) are finite. The Nakayama Conjecture is also   a special case  of the following conjecture posed by  Auslander and Reiten \cite{ar2} in 1994.
\\
\\
{\bf Auslander-Gorenstein Conjecture (AGC)  }\ \ {\it  Let   $\Lambda$  be    an Artin algebra. If    {\rm pd}$_{\Lambda}$$(E_{i}) \leq i$ for all $i \geq 0$, then  $\Lambda$ is  Gorenstein.}
\\
\par
The following   conjecture was posed by  Auslander and Reiten in \cite[p. 150]{ar1} (see also \cite[Conjecture (13)]{ars}).
\\
\\
{\bf Gorenstein Symmetry Conjecture (GSC) }\ \ {\it  Let   $\Lambda$  be    an Artin algebra   with {\rm id}$_{\Lambda}$$(\Lambda) < \infty$. Then {\rm id}$_{\Lambda^{{\rm op}}}$$(\Lambda^{{\rm op}}) < \infty$}.
\\
\par
For an Artin algebra $\Lambda$,   we learn from \cite[p. 121]{ar1}  and  \cite[Corollary 2]{hua} that {\rm id}$_{\Lambda}$$(\Lambda) \leq 1$ if and only if {\rm id}$_{\Lambda^{{\rm op}}}$$(\Lambda^{{\rm op}}) \leq 1$.
Later, Beligiannis \cite[Theorem 11.4]{bel}  proved that (GSC) holds for the class of  virtually Gorenstein Artin  algebras.

It is well known that all the conjectures stated above are formal consequences of the conjecture formulated below.
\\
\\
{\bf Finitistic Dimension Conjecture (FDC) }\ \ {\it  Let   $\Lambda$  be    an Artin algebra. Then} findim$\Lambda$ := sup\{pd$_{\Lambda}(M)$ \ $|$ \ $M \in$  mod-$\Lambda$ with pd$_{\Lambda}(M)    < \infty$\} $< \infty$.
\\
\par
We summarize the relation of each conjecture in the following implication for Artin algebras,
  \[
\begin{array}{ccccccccccc}
\mathrm{(FDC)} & {\xRightarrow{\hspace{2em}}} & \mathrm{(SNC)} &{\xRightarrow{\hspace{2em}}} & \mathrm{(GNC)} &{\xRightarrow{\hspace{2.7em}}} & \mathrm{(AGC)} & \multicolumn{3}{c}{\xRightarrow{\hspace{2.67em}}} & \mathrm{(NC)} \\
\big\Downarrow & & & & \mathrel{\rotatebox[origin=c]{90}{$\Lleftarrow\mspace{-6mu}\Rrightarrow$}} & & & & \mathrel{\rotatebox[origin=c]{90}{$\Lleftarrow\mspace{-6mu}\Rrightarrow$}} \\
\mathrm{(GSC)} & & & & \mathrm{(ARC)} & \multicolumn{3}{c}{\xRightarrow{\hspace{8em}}} & \mathrm{(TC1)}\ \mathrm{and}\ \mathrm{(TC2)}
\end{array}
\]
 where the notation (p) $\Lleftarrow\mspace{-6mu}\Rrightarrow $ (q) means that all  Artin algebras satisfy (p) if and only if all  Artin algebras satisfy (q), while (p) $\Rightarrow$ (q)  means that   every Artin algebra that  satisfies (p) also satisfies (q).

 Auslander, Reiten and Smal${\o}$ \cite{ars} pointed out  the implications  ${\rm (FDC)} \Rightarrow {\rm(GSC)}$ and ${\rm (FDC)} \Rightarrow {\rm(SNC)}$.
The implications  ${\rm (SNC)} \Rightarrow {\rm(GNC)} \Rightarrow {\rm(NC)}$  and  ${\rm (ARC)} \Rightarrow {\rm(TC)}$ are trivial.
Auslander and Reiten established the implication ${\rm (GNC)} \Rightarrow {\rm(AGC)}$ in  \cite[Corollary 5.5(b)]{ar2} and the equivalence
${\rm (GNC)}   \Lleftarrow\mspace{-6mu}\Rrightarrow {\rm (ARC)} $ in  \cite{ar0}. Finally, Tachikawa proved the   equivalence
${\rm (NC)}   \Lleftarrow\mspace{-6mu}\Rrightarrow {\rm (TC)} $ in  \cite[pp. 115-116]{tac}.

Although numerous known cases have been established where one of these conjectures holds (see, for instance, [3-7, 10-17, 24, 27, 29-33,   35, 36] and the references therein),
the full resolution of all these conjectures still remains open.

As an immediate consequence of Theorem \ref{thm81}, we obtain the  validity of  (SNC)  for virtually Gorenstein Artin  algebras (see Corollary \ref{cor10}).

\begin{cor}\label{cor881}   Let  $\Lambda$ be a virtually Gorenstein Artin  algebra,  and    $M\in $  {\rm Mod}-$\Lambda$. If ${\rm Ext}_{\Lambda}^{n}(M, \Lambda)=0$  for
 all $n \geq 0$, then $M  $ is zero.\end{cor}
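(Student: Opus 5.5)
The plan is to deduce this from Theorem \ref{thm81} together with the standard facts that Gorenstein projective modules embed into projectives and that, for Artin algebras, every projective module (possibly infinitely generated) is a direct summand of a direct sum of copies of $\Lambda$.

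First, the hypothesis ${\rm Ext}_{\Lambda}^{n}(M,\Lambda)=0$ for all $n\geq 1$ says exactly that $M\in {}^{\perp_{\infty}}\Lambda$. Since $\Lambda$ is virtually Gorenstein, Theorem \ref{thm81} gives $M\in\mathscr{GP}$.

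By Definition \ref{defn1}, $M$ is then the image of $P_0\to P_{-1}$ in a complete projective resolution. In particular there is an injection $\iota\colon M\hookrightarrow P$ with $P=P_{-1}$ projective, and every map from $M$ to a projective module factors through $\iota$. This factorization holds because ${\rm Hom}_{\Lambda}(-,Q)$ leaves the complete resolution exact for every projective $Q$, so ${\rm Hom}(P_{-1},Q)\to{\rm Hom}(M,Q)$ is surjective.

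Write $P$ as a direct summand of $\Lambda^{(I)}$ for some index set $I$, so that we may regard $\iota$ as an injection $M\hookrightarrow\Lambda^{(I)}$. The hypothesis for $n=0$ gives ${\rm Hom}_{\Lambda}(M,\Lambda)=0$, and we need ${\rm Hom}_{\Lambda}(M,\Lambda^{(I)})=0$. This is the one point requiring care, since $M$ may be infinitely generated and ${\rm Hom}$ does not commute with infinite direct sums in the second variable.

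I would handle this point as follows. For an Artin algebra, $\Lambda$ is a finitely generated module over the artinian commutative ring $R$, hence $\Sigma$-pure-injective. So $\Lambda^{(I)}$ is a direct summand of the product $\Lambda^{I}$ (for instance via the canonical pure embedding $\Lambda^{(I)}\to\Lambda^{I}$, which splits by pure-injectivity of $\Lambda^{(I)}$). Therefore
\[
{\rm Hom}_{\Lambda}(M,\Lambda^{(I)})\subseteq {\rm Hom}_{\Lambda}(M,\Lambda^{I})\cong {\rm Hom}_{\Lambda}(M,\Lambda)^{I}=0.
\]
It follows that $\iota=0$. Since $\iota$ is injective, $M=0$.

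The expected obstacle is exactly this passage from ${\rm Hom}(M,\Lambda)=0$ to ${\rm Hom}(M,\Lambda^{(I)})=0$ for infinitely generated $M$. If one prefers to avoid pure-injectivity, a fallback is to use a coordinate argument: each component $M\to\Lambda^{(I)}\to\Lambda$ of $\iota$ vanishes, and an element of $\Lambda^{(I)}$ all of whose coordinates vanish is zero. Either route gives the statement of Corollary \ref{cor881}.
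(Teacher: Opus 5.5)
Your proof is correct and follows the paper's route. Theorem \ref{thm81} makes $M$ Gorenstein projective, and ${\rm Hom}_{\Lambda}(M,\Lambda)=0$ then forces $M=0$; you also spell out the step the paper leaves implicit, namely embedding $M$ into a free module $\Lambda^{(I)}$. Your coordinate ``fallback'' is already a complete argument, since ${\rm Hom}_{\Lambda}(M,-)$ preserves the monomorphism $\Lambda^{(I)}\hookrightarrow\Lambda^{I}$. So the $\Sigma$-pure-injectivity detour is correct but unnecessary, and the ``obstacle'' you flag is not really one.
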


It follows  that   (GNC), (AGC) and (NC)   are true for      virtually Gorenstein Artin  algebras.
We also  prove  the  validity  of (ARC)  for virtually Gorenstein Artin algebras  of  finite CM-type (see Theorem  \ref{thm501}).
Finally, we provide another proof of (GSC) for virtually Gorenstein Artin algebras (see  Theorem \ref{thm102}),  and obtain the validity of (TC1) for virtually Gorenstein Artin  algebras.

We continue the introduction by recalling some basic definitions and notions
that we use in this paper. Given a $\Lambda$-module $M$ and a class $ \mathscr{C}$ of    $\Lambda$-modules, we denote

  \begin{itemize}
 \item  $^\perp {\mathscr{C}}=\{X: {\rm Ext_{\Lambda}^{1}}(X, C)=0$  for all $C\in
{\mathscr {C}}\}$,
 \item  $ {\mathscr {C}}^\perp=\{X: {\rm Ext_{\Lambda}^{1}}(C, X)=0$
for all $C\in {\mathscr {C}}\}$,
\item  $^{\perp_{\infty}} {\mathscr{C}}=\{X: {\rm Ext}_{\Lambda}^{i}(X, C)=0$  for all $C\in
{\mathscr {C}} $  and $i \geq 1$\},
 \item  $ {\mathscr {C}}^{\perp_{\infty}}=\{X: {\rm Ext}_{\Lambda}^{i}(C, X)=0 $
for all $C\in {\mathscr {C}}$  and $i \geq 1$\},
\item  add$(\mathscr {C}) =$   the class of all direct summands of
finite direct sums of modules in $ \mathscr{C}$,
\item${\bf Proj}(\Lambda)$
= the class of all projective right $\Lambda$-modules,
\item${\bf Inj}(\Lambda)$
= the class of all injective right $\Lambda$-modules.\end{itemize}

{\bf   (Pre)covers and (pre)envelopes.} \ \ Let $\mathscr{C}$    be a  class of modules in  Mod-$\Lambda$.
A  morphism $\phi: C\rightarrow M$ of Mod-$\Lambda$ with $C\in \mathscr{C}$ is called a  $\mathscr{C}$-$precover$ \cite{eno5} of $M$ if for any
morphism $f: C'\rightarrow M$ with $C'\in \mathscr{C}$,
there is a morphism $g : C'\rightarrow C$ such that $\phi g = f$. Moreover, if the only such $g$ are automorphisms of $C$ when
$C'= C$ and $f= \phi $, then the $\mathscr{C}$-precover $\phi$ is
called a $ \mathscr{C}$-$cover$. Dually, we have the
definitions of a  $\mathscr{C}$-preenvelope and a $\mathscr{C}$-envelope.

A  module $M$ is said to have a  {\it special}  $\mathscr{C}$-$precover$, if there is an  exact sequence  $ 0
\rightarrow K\rightarrow C\rightarrow M\rightarrow 0$  with $C\in
\mathscr{C}$ and $K\in
{\mathscr {C}}^\perp$.  $M$ is said to have a  {\it special}  $\mathscr{C}$-$preenvelope$, if there is an  exact sequence  $ 0
\rightarrow M\rightarrow C\rightarrow L\rightarrow 0$  with $C\in
\mathscr{C}$ and $L\in
{^\perp {\mathscr{C}}}$.
\\
\par
{\bf   Cotorsion  pair.}\ \ A pair $(\mathscr{X},\mathscr{Y})$  of subcategories of  Mod-$\Lambda$   is said to be  a  {\it cotorsion  pair} \cite{eno5} if ${\mathscr{X}}^\perp = {\mathscr{Y}}
 $ and $^\perp{\mathscr {Y}}$ = ${\mathscr {X}}$. A cotorsion pair (${\mathscr{X},\ \mathscr{Y}}$)
 is called $complete$   if every object in  Mod-$\Lambda$   has a   special  ${\mathscr{X}}$-precover
and  a   special  $\mathscr
{Y}$-preenvelope.  A
cotorsion pair (${\mathscr{X},\ \mathscr{Y}}$)
 is said to be  $ hereditary$   if whenever
 $0\rightarrow Y\rightarrow Y'\rightarrow
Y''\rightarrow 0$ is exact with $Y$, $Y'\in {\mathscr{Y}}$
then $Y''\in {\mathscr{Y}}$.

\medskip

\section{A characterization of    modules of  virtually finite  injective
  dimension}

The modules in  $^\perp\mathscr{GI}$ are said to be  {\it  of  virtually finite  injective
  dimension} \cite{bel}.
In this section, we   give a characterization of modules of  virtually finite  injective
  dimension over  right Noetherian rings.

We need   the following    proposition  due to $\check{\rm S}$aroch and $\check{\rm S}$$\check{\rm t}$ov$\acute{\rm {\i}}$$\check{\rm c}$ek.
\begin{prop}\label{prop1} {\rm (\cite[Theorem 5.6]{ss})}. For any ring $\Lambda$, $(^\perp\mathscr{GI}$, $\mathscr{GI})$ is a hereditary complete
  cotorsion pair.   \end{prop}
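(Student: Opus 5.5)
The plan is to realise $\mathscr{GI}$ as the right-hand class of a cotorsion pair \emph{cogenerated by a set}. Completeness would then follow from the Eklof--Trlifaj theorem, and heredity would be checked directly. The difficulty is that $\mathscr{GI}$ is defined through totally acyclic complexes of injectives, not as $\mathcal{S}^{\perp}$ for a set $\mathcal{S}$. So I would first lift the problem to the category $\mathbf{C}(\Lambda)$ of complexes, and then come back down to modules.

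\textbf{Step 1: a cotorsion pair in complexes.} Let $\mathcal{T}$ be the class of exact complexes of injective modules that remain exact under ${\rm Hom}_{\Lambda}(E,-)$ for every injective $E$. By definition, the Gorenstein injective modules are exactly the cycle modules $Z_0(I)$ with $I\in\mathcal{T}$. I would aim to find a \emph{set} $\mathcal{S}$ of complexes with $\mathcal{T}=\mathcal{S}^{\perp}$ in $\mathbf{C}(\Lambda)$. Three ingredients should go into $\mathcal{S}$:
\begin{itemize}
\item disc and sphere complexes built from a set of cyclic modules, to force every term to be injective (Baer's criterion);
\item shifted spheres on generators, to force exactness;
\item complexes encoding ${\rm Hom}(E,-)$-exactness.
\end{itemize}
The third ingredient is the crux. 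There is no bound on the injectives $E$, so I would replace them by a set, using a cardinality/deconstructibility argument. The key fact I would use is that exactness of ${\rm Hom}(E,I)$ for all injective $E$ is equivalent to exactness of ${\rm Hom}(-,I)$ on a set of $\kappa$-presentable ``test'' complexes. To prove this, I would write an arbitrary injective, or a relevant syzygy complex, as a transfinite extension of small pieces. Then I would apply Eklof's lemma: $\mathcal{S}^{\perp}$ is closed under passing from the pieces to the filtered object.

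\textbf{Step 2: passing to modules.} I would apply the Eklof--Trlifaj theorem in the Grothendieck category $\mathbf{C}(\Lambda)$ to get a complete cotorsion pair $({}^{\perp}\mathcal{T},\mathcal{T})$. Given a module $M$, I would embed a suitable complex attached to $M$ (for instance a disc or sphere on $M$) into $\mathcal{T}$ by a special preenvelope. Taking $Z_0$ then yields a short exact sequence
\[0\to M\to G\to L\to 0\]
with $G\in\mathscr{GI}$, and I would check that $L\in{}^{\perp}\mathscr{GI}$. For special precovers I would use Salce's lemma: take a special preenvelope of a syzygy of $M$ and form the pushout, using that $\mathscr{GI}$ contains all injectives and is closed under extensions. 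Finally I would check the identity $({}^{\perp}\mathscr{GI})^{\perp}=\mathscr{GI}$. The inclusion ``$\supseteq$'' is trivial. The inclusion ``$\subseteq$'' follows from the special preenvelope: if $X\in({}^{\perp}\mathscr{GI})^{\perp}$, then the sequence $0\to X\to G\to L\to 0$ splits, so $X$ is a summand of $G$. Here I need that $\mathscr{GI}$ is closed under direct summands, which I would prove via an Eilenberg swindle using closure under products.

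\textbf{Step 3: heredity.} I would show that $\mathscr{GI}$ is closed under cokernels of monomorphisms. Given $0\to G'\to G\to G''\to 0$ with $G',G\in\mathscr{GI}$, I would splice a left half of a complete injective resolution of $G'$ with a right half built from $G$, in the style of a horseshoe argument. Exactness of ${\rm Hom}(E,-)$ is preserved because ${\rm Ext}^{1}(E,G')=0$.

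I expect the main obstacle to be the set-theoretic reduction in Step 1: replacing the proper class of injectives $E$ by a set of test objects. This is exactly where Šaroch and Šťovíček's insight lies. My first attempt would be to show that $\mathcal{T}$ is closed under direct limits and pure quotients in a suitable sense, and then invoke the criterion that a class of this kind that is definable (or at least closed under $\kappa$-pure subobjects) is of the form $\mathcal{S}^{\perp}$.
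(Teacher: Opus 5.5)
The paper gives no proof of this statement; it is imported from \v{S}aroch--\v{S}\v{t}ov\'{\i}\v{c}ek \cite[Theorem 5.6]{ss}. Your architecture is sensible: cogenerate the class $\mathcal{T}$ of totally acyclic complexes of injectives by a set in $\mathbf{C}(\Lambda)$, apply Eklof--Trlifaj, descend via $Z_0$, then use Salce and Holm's closure properties. Steps 2--3 can be completed. The gap is the step that carries the entire difficulty: replacing the proper class of injectives $E$ in the ${\rm Hom}_{\Lambda}(E,-)$-condition by a set. You only announce it, and both mechanisms you suggest for it fail.

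For an exact complex $X$ with injective terms, use the sequence $0\to S^{n-1}(N)\to D^{n}(N)\to S^{n}(N)\to 0$ and ${\rm Ext}^1_{\mathbf{C}(\Lambda)}(D^{n}(N),X)\cong{\rm Ext}^1_{\Lambda}(N,X_n)=0$. They give ${\rm Ext}^1_{\mathbf{C}(\Lambda)}(S^{n}(N),X)\cong{\rm Ext}^1_{\Lambda}(N,Z_nX)$, so $S^{n}(N)\in{}^{\perp}\mathcal{T}$ if and only if $N\in{}^{\perp}\mathscr{GI}$. Hence Eklof's lemma, applied to a filtration of an injective $E$ (or of $S^{n}(E)$), needs the filtration factors to lie in ${}^{\perp}\mathscr{GI}$. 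Nothing gives the small factors of a filtration of $E$ this property. The assertion that every injective is filtered by ${<}\kappa$-presented modules from ${}^{\perp}\mathscr{GI}$ is essentially the deconstructibility of ${}^{\perp}\mathscr{GI}$, i.e.\ the substance of what you want to prove.

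The definability fallback also fails. $\mathcal{T}$ is not even closed under direct sums when $\Lambda$ is not right Noetherian. Each $D^{0}(E_i)$ with $E_i$ injective lies in $\mathcal{T}$, but by Bass--Papp one can choose the $E_i$ so that $\bigoplus_i D^{0}(E_i)=D^{0}(\bigoplus_i E_i)\notin\mathcal{T}$. This set-theoretic reduction is precisely the hard part of \cite{ss}, which relies on the singular compactness and definability tools developed there. Your proposal contains no substitute for it, so it does not establish the statement for arbitrary rings.

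Two smaller points. First, in Step 2, ``check that $L\in{}^{\perp}\mathscr{GI}$'' is not automatic, because cycles of objects of ${}^{\perp}\mathcal{T}$ can be arbitrary. The contractible complex $N\xrightarrow{1}N$, with the right-hand $N$ in degree $0$, lies in ${}^{\perp}\mathcal{T}$ and has $Z_0=N$. For a special preenvelope $0\to S^{0}(M)\to X\to Y\to 0$ the claim is true, but the proof must use ${\rm Ext}^1_{\mathbf{C}(\Lambda)}(Y,X')=0$ for $X'\in\mathcal{T}$. It also needs the fact that homomorphisms between Gorenstein injectives lift to chain maps between their complete resolutions.

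Second, for right Noetherian $\Lambda$ (the only case this paper uses) your crux disappears. Every injective is a direct sum of indecomposable injectives $E_j$ from a set, and ${\rm Ext}^1_{\mathbf{C}(\Lambda)}(S^{n}(\bigoplus_j E_j),X)\cong\prod_j{\rm Ext}^1_{\mathbf{C}(\Lambda)}(S^{n}(E_j),X)$. So $\mathcal{T}$ is cogenerated by an explicit set; compare the set $\mathscr{X}$ of Notation~\ref{nota1} and Theorem~\ref{thm1}.
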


We recall some definitions from \cite{eno5} and \cite{gbe}.

 \begin{defn} \label{defn2} \rm{Let $\mu$ be an ordinal and $\mathscr{A}$ = ($A_{\alpha} $ $|$ $\alpha \leq \mu$) be a sequence of modules.
  If $A_{0} $ = 0,  $A_{\alpha} \subseteq  A_{\alpha + 1}$ for all $\alpha < \mu$ and   $A_{\alpha} $ = $\bigcup_{\beta < \alpha} A_{\beta} $ for all
 limit ordinals $\alpha \leq \mu$, then the sequence $\mathscr{A}$  is called a {\it continuous {\rm (}well ordered{\rm )} chain of modules.}

   Let $M$ be a module and $\mathscr{X}$ be a class of modules. $M$ is  $\mathscr{X}$-{\it filtered}, provided that there is an ordinal $\mu$
   and  a continuous chain of modules, ($M_{\alpha} $ $|$ $\alpha \leq \mu$), consisting of submodules of $M$ such that $M$ = $M_{\mu} $,
    and each of the modules $M_{\alpha + 1}/M_{\alpha}  $  ($\alpha < \mu$) is isomorphic to an element of $\mathscr{X}$. Then chain ($M_{\alpha} $ $|$ $\alpha \leq \mu$)
   is called an $\mathscr{X}$-{\it filtration} of $M$. If $\mu$ is finite, then  $M$ is said to be {\it finitely} $\mathscr{X}$-{\it filtered}.}\end{defn}

\begin{nota}\label{nota11}  {\rm   Let   $\mathscr{X}$ be a class of modules. The class of all finitely  $\mathscr{X}$-filtered modules is denoted by
 filt($\mathscr{X}$)}.
\end{nota}

For convenience, we list the following two results.

\begin{lem}\label{lem2}  {\rm (\cite[Corollary 6.14]{gbe})}. Let   $\mathscr{X}$ be a set of  $\Lambda$-modules containing $\Lambda$.
Then the class  $^\perp(\mathscr{X}^\perp)$ consists of all direct summands of $\mathscr{X}$-filtered modules.
\end{lem}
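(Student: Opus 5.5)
The argument has two parts. First, every direct summand of an $\mathscr{X}$-filtered module lies in $^\perp(\mathscr{X}^\perp)$. Second, conversely, every module in $^\perp(\mathscr{X}^\perp)$ is a direct summand of an $\mathscr{X}$-filtered module. The second part is the Eklof--Trlifaj construction.

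\emph{Step 1: filtered modules lie in $^\perp(\mathscr{X}^\perp)$.} Let $(M_\alpha \mid \alpha\le\mu)$ be an $\mathscr{X}$-filtration and $Y\in\mathscr{X}^\perp$. We show by transfinite induction that $\mathrm{Ext}^1_\Lambda(M_\alpha,Y)=0$ for all $\alpha\le\mu$ (Eklof's lemma).
\begin{itemize}
\item At successor steps, apply the long exact sequence for $0\to M_\alpha\to M_{\alpha+1}\to M_{\alpha+1}/M_\alpha\to 0$, using $M_{\alpha+1}/M_\alpha\in\mathscr{X}$.
\item At a limit ordinal, use the following extension argument. Vanishing of $\mathrm{Ext}^1_\Lambda(M_{\alpha+1}/M_\alpha,Y)$ means that every homomorphism $M_\alpha\to Y$ extends to $M_{\alpha+1}$. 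So, given $f:M_0\to Y$, we build compatible extensions by transfinite recursion, taking unions at limit ordinals. This gives surjectivity of $\mathrm{Hom}_\Lambda(M_\mu,Y)\to \mathrm{Hom}_\Lambda(M_\beta,Y)$ for all $\beta$.
\item Apply this to $M\hookrightarrow E$ with $E$ injective to conclude $\mathrm{Ext}^1_\Lambda(M_\mu,Y)=0$.
\end{itemize}
Since $\mathrm{Ext}^1$ commutes with finite direct sums, direct summands of such modules also lie in $^\perp(\mathscr{X}^\perp)$.

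\emph{Step 2: a special $\mathscr{X}^\perp$-preenvelope with $\mathscr{X}$-filtered cokernel.} Fix a module $K$. Choose an infinite regular cardinal $\kappa$ such that every $X\in\mathscr{X}$ is ${<}\kappa$-presented; this is possible because $\mathscr{X}$ is a set. For each $X$, fix a presentation $0\to A_X\to F_X\to X\to 0$ with $F_X$ free.

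Build a continuous chain $K=K_0\subseteq K_1\subseteq\cdots\subseteq K_\kappa$ as follows. The module $K_{\alpha+1}$ is the pushout of
\[
\textstyle\bigoplus A_X\to\bigoplus F_X
\]
along the sum of all homomorphisms $A_X\to K_\alpha$, one summand for each $X\in\mathscr{X}$ and each map $A_X\to K_\alpha$. Then $K_{\alpha+1}/K_\alpha$ is a direct sum of copies of members of $\mathscr{X}$. Hence $K_\kappa/K$ is $\mathscr{X}$-filtered, after refining each successor step into single summands.

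The key claim, and the step I expect to be the main obstacle, is that $K_\kappa\in\mathscr{X}^\perp$. Since $F_X$ is projective, it suffices to show that every map $A_X\to K_\kappa$ extends to $F_X$. Because $A_X$ is ${<}\kappa$-generated and $\kappa$ is regular, such a map factors through some $K_\alpha$ with $\alpha<\kappa$. By construction it then extends to $F_X\to K_{\alpha+1}$. The care needed is in choosing $\kappa$ so that the cardinality and regularity bookkeeping works.

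\emph{Step 3: the converse.} Let $M\in{}^\perp(\mathscr{X}^\perp)$.
\begin{enumerate}
\item Take a free module $F$ with an epimorphism $F\to M$ and kernel $K$. Since $\Lambda\in\mathscr{X}$, the module $F$ is $\mathscr{X}$-filtered.
\item Apply Step 2 to $K$ to get $0\to K\to P\to Z\to 0$ with $P\in\mathscr{X}^\perp$ and $Z$ $\mathscr{X}$-filtered.
\item Form the pushout $N$ of $P\leftarrow K\to F$. This gives exact sequences $0\to F\to N\to Z\to 0$ and $0\to P\to N\to M\to 0$.
\item From the first sequence, $N$ is $\mathscr{X}$-filtered: concatenate a filtration of $F$ with the preimage of a filtration of $Z$.
\item In the second sequence, $\mathrm{Ext}^1_\Lambda(M,P)=0$ because $M\in{}^\perp(\mathscr{X}^\perp)$ and $P\in\mathscr{X}^\perp$. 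So the sequence splits and $M$ is a direct summand of $N$.
\end{enumerate}
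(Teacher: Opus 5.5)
Your argument is correct, and it is the standard proof behind the result that the paper cites without proof (\cite[Corollary 6.14]{gbe}). It combines Eklof's lemma for one inclusion, the Eklof--Trlifaj small-object construction of a special $\mathscr{X}^\perp$-preenvelope with $\mathscr{X}$-filtered cokernel, and the pushout-and-split trick using $\Lambda\in\mathscr{X}$ for the converse.

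The one blemish is the limit step of Step~1. Its last sentence (``apply this to $M\hookrightarrow E$'') does not by itself yield $\mathrm{Ext}^1_\Lambda(M_\mu,Y)=0$ from the Hom-surjectivity you established. What you actually need is to glue compatible splittings of a given extension $0\to Y\to Z\to M_\lambda\to 0$ over the $M_\beta$, correcting each new splitting by a map $M_{\beta+1}\to Y$ that extends the discrepancy on $M_\beta$. Alternatively, simply invoke the $n=1$ case of Lemma~\ref{lem1}.
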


\begin{lem}\label{lem21}  {\rm (\cite[Corollary 1.14(ii)]{poj})}. Let   $\mathscr{X}$ be a set of finitely presented  $\Lambda$-modules containing $\Lambda$.
Then the class   {\rm add(filt($\mathscr{X}$))} coincides with the class of all finitely presented modules in $^\perp(\mathscr{X}^\perp)$.
\end{lem}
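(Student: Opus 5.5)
The inclusion ``$\subseteq$'' is routine, so the work is in the reverse inclusion. For that I would reduce from an arbitrary $\mathscr{X}$-filtration, supplied by Lemma \ref{lem2}, to a finite one by means of the Hill Lemma.

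First, $\mathscr{X}$ consists of finitely presented modules, and finitely presented modules are closed under extensions. So every module in filt($\mathscr{X}$), and every direct summand of one, is finitely presented. Moreover $\mathscr{X}\subseteq {}^\perp(\mathscr{X}^\perp)$, and the class ${}^\perp(\mathscr{X}^\perp)$ is closed under extensions (long exact Ext sequence) and under direct summands. Induction on the length of a finite filtration then gives add(filt($\mathscr{X}$)) $\subseteq {}^\perp(\mathscr{X}^\perp)\cap\{\text{finitely presented}\}$.

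Conversely, let $M$ be finitely presented with $M\in{}^\perp(\mathscr{X}^\perp)$. By Lemma \ref{lem2}, $M$ is a direct summand of some $\mathscr{X}$-filtered module $N$. Fix a split embedding $\iota: M\to N$ and a retraction $\pi: N\to M$ with $\pi\iota=1_M$, and fix an $\mathscr{X}$-filtration $(N_\alpha\mid\alpha\le\mu)$ of $N$. I would now apply the Hill Lemma (\cite[Theorem 7.10]{gbe}) with $\kappa=\aleph_0$; this choice is allowed because every module in $\mathscr{X}$ is $<\aleph_0$-presented, i.e. finitely presented. It yields a complete distributive lattice $\mathcal{H}$ of submodules of $N$ with the following properties.
\begin{itemize}
\item Each $S\in\mathcal{H}$ is the union of a closed subset of the filtration, so $S$ and $N/S$ are again $\mathscr{X}$-filtered.
\item For $S\subseteq S'$ in $\mathcal{H}$, the quotient $S'/S$ is $\mathscr{X}$-filtered.
\item For each $S\in\mathcal{H}$ and each finite subset $F\subseteq N$ there is $S'\in\mathcal{H}$ containing $S\cup F$ such that $S'/S$ is $<\aleph_0$-presented, that is, filtered by only finitely many members of $\mathscr{X}$.
\end{itemize}

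Apply the last property with $S=0$ and $F$ a finite generating set of $\iota(M)$. This gives $S'\in\mathcal{H}$ with $\iota(M)\subseteq S'$ and $S'$ finitely $\mathscr{X}$-filtered, so $S'\in$ filt($\mathscr{X}$). The restriction $\pi|_{S'}: S'\to M$ still satisfies $\pi|_{S'}\iota=1_M$. Hence $M$ is a direct summand of $S'$, and therefore $M\in$ add(filt($\mathscr{X}$)).

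The main obstacle is the application of the Hill Lemma in the countable-free case $\kappa=\aleph_0$. One must check that ``$<\aleph_0$-presented'' in its conclusion really yields a filtration with only finitely many factors, each isomorphic to an element of $\mathscr{X}$. This is exactly where the hypothesis that $\mathscr{X}$ consists of finitely presented modules is used. If citing the Hill Lemma in this form is undesirable, the fallback is a direct argument. One shows that the finitely many generators of $\iota(M)$ lie in a submodule of $N$ built from finitely many filtration steps, closing under the finitely many ``support'' indices needed at each stage, and one uses the finite presentation of each factor to keep that closure finite.
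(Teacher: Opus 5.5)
Your argument is correct. The paper gives no proof of its own here, only the citation \cite[Corollary 1.14(ii)]{poj}. Your route is the standard proof of that fact: use Lemma \ref{lem2} to write $M$ as a summand of an $\mathscr{X}$-filtered $N$, apply the Hill Lemma at $\kappa=\aleph_0$ to find $S'\in\mathcal{H}$ with $\iota(M)\subseteq S'$, and restrict the retraction to $S'$. The easy inclusion is also fine.

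The one step you assert rather than prove is in your third bullet, where you identify ``$S'$ is $<\aleph_0$-presented'' with ``$S'$ is finitely $\mathscr{X}$-filtered''. This is true and closes in a few lines, in either of two ways.

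\emph{From the Hill Lemma construction.} There $S'=\sum_{\alpha\in T}A_\alpha$, where $N_{\alpha+1}=N_\alpha+A_\alpha$ with $A_\alpha$ finitely generated, and $T$ is a closed set of cardinality $<\kappa$, i.e.\ finite. So $S'$ inherits an $\mathscr{X}$-filtration indexed by the finite set $T$.

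\emph{Directly.} By (H3), $S'$ is $\mathscr{X}$-filtered, and it is finitely generated. Delete repeated terms of its filtration $(P_\alpha\mid\alpha\le\sigma)$; this keeps the chain continuous, and now all factors are nonzero and finitely presented. Suppose $\sigma$ were infinite, and write $\sigma=\beta+n$ with $\beta$ a limit ordinal and $n<\omega$. The kernel of an epimorphism from a finitely generated module onto a finitely presented one is finitely generated. Descending from $P_\sigma=S'$ through $P_{\beta+n-1},\ldots,P_\beta$ therefore shows that $P_\beta$ is finitely generated. This contradicts the fact that $P_\beta=\bigcup_{\alpha<\beta}P_\alpha$ is a strictly increasing union. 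Hence $\sigma<\omega$ and $S'\in\mathrm{filt}(\mathscr{X})$. Over an Artin algebra this is even quicker, since every submodule of $S'$ is finitely generated.

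With this inserted, the fallback in your last paragraph is unnecessary. If you do keep it, note that finite presentation only guarantees finitely many new ``support'' indices at each stage. The finiteness of the whole closure comes from transfinite induction on the index, i.e.\ from the well-foundedness of the ordinals.
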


The following homological lemma is a generalization of \cite[Theorem 7.3.4]{eno5}.

\begin{lem}\label{lem1}  {\rm (\cite[Lemma 3.8]{mao}).} Let $M$ and $N$ be   $\Lambda$-modules, let   $n$ be a positive integer,   and suppose that $M$ is
the union of a continuous chain of submodules $(M_{\alpha} $ $|$ $\alpha \leq \kappa)$. If ${\rm Ext}_{\Lambda}^{n}(M_{0}, N)=0$ and ${\rm Ext}_{\Lambda}^{n}(M_{\alpha + 1}/M_{\alpha}, N)=0$
whenever  $\alpha  < \kappa$\, then ${\rm Ext}_{\Lambda}^{n}(M, N)=0$. \end{lem}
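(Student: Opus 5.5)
The plan is to reduce to the case $n=1$ by dimension shifting in the second variable, and then to prove the case $n=1$ by a transfinite extension argument, as in Eklof's lemma (\cite[Theorem 7.3.4]{eno5}).

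First, the reduction. Take an injective resolution of $N$ and truncate it, giving an exact sequence
$$0\rightarrow N\rightarrow I_0\rightarrow \cdots \rightarrow I_{n-2}\rightarrow L\rightarrow 0$$
with each $I_j$ injective. When $n=1$ there is nothing to truncate and we take $L=N$. Since $\mathrm{Ext}^{i}_{\Lambda}(X,I_j)=0$ for all $i\geq 1$ and all modules $X$, the long exact sequences give natural isomorphisms
$$\mathrm{Ext}^{n}_{\Lambda}(X,N)\cong \mathrm{Ext}^{1}_{\Lambda}(X,L)\quad\text{for every module } X.$$
So the hypotheses become $\mathrm{Ext}^1_{\Lambda}(M_0,L)=0$ and $\mathrm{Ext}^1_{\Lambda}(M_{\alpha+1}/M_\alpha,L)=0$ for all $\alpha<\kappa$. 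The conclusion to prove becomes $\mathrm{Ext}^1_{\Lambda}(M,L)=0$.

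Second, the case $n=1$. Embed $L$ into an injective module, $0\rightarrow L\rightarrow E\rightarrow C\rightarrow 0$. Then $\mathrm{Ext}^1_{\Lambda}(X,L)=0$ if and only if every homomorphism $X\rightarrow C$ lifts to $E$. Fix $f\colon M\rightarrow C$. By transfinite induction on $\alpha\leq\kappa$, I will construct lifts $g_\alpha\colon M_\alpha\rightarrow E$ of $f|_{M_\alpha}$ such that $g_\beta$ restricts to $g_\alpha$ whenever $\alpha<\beta$.
\begin{itemize}
\item At $\alpha=0$, the lift $g_0$ exists because $\mathrm{Ext}^1_{\Lambda}(M_0,L)=0$.
\item At a successor $\alpha+1$, first choose any lift $h\colon M_{\alpha+1}\rightarrow E$ of $f|_{M_{\alpha+1}}$; it exists because $M_{\alpha+1}$ is an extension of $M_\alpha$ by $M_{\alpha+1}/M_\alpha$ and both have vanishing $\mathrm{Ext}^1$ into $L$. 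The difference $h|_{M_\alpha}-g_\alpha$ maps into $L$. Since $E$ is injective, extend $h|_{M_\alpha}-g_\alpha\colon M_\alpha\rightarrow L\subseteq E$ to some $u\colon M_{\alpha+1}\rightarrow E$. Then $h-u$ agrees with $g_\alpha$ on $M_\alpha$.
\item Continuing the successor step, it remains to make $h-u$ a lift of $f$. Its composite to $C$ differs from $f|_{M_{\alpha+1}}$ by a map vanishing on $M_\alpha$, that is, by a map $M_{\alpha+1}/M_\alpha\rightarrow C$. Lift this map to $E$ using $\mathrm{Ext}^1_{\Lambda}(M_{\alpha+1}/M_\alpha,L)=0$, and correct $h-u$ by it. The corrected map is $g_{\alpha+1}$; it lifts $f|_{M_{\alpha+1}}$ and still restricts to $g_\alpha$, since the correction vanishes on $M_\alpha$.
\item At a limit ordinal $\alpha$, set $g_\alpha=\bigcup_{\beta<\alpha}g_\beta$. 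This is well defined by compatibility and continuity of the chain.
\end{itemize}
Then $g_\kappa$ lifts $f$, which shows $\mathrm{Ext}^1_{\Lambda}(M,L)=0$.

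The main obstacle is the successor step, where the lift must be modified so that it restricts exactly to the previously chosen $g_\alpha$. This is where both the injectivity of $E$ and the vanishing of $\mathrm{Ext}^1$ on the factor $M_{\alpha+1}/M_\alpha$ are used. The only difference from Eklof's lemma is that $M_0$ need not be zero, and the hypothesis on $M_0$ handles exactly the base case.
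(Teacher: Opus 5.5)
The paper gives no proof of this lemma. It quotes it from \cite[Lemma 3.8]{mao} and describes it as a generalization of Eklof's lemma \cite[Theorem 7.3.4]{eno5}. Your route is the natural one behind that remark: shift to $n=1$ through the $(n-1)$-st cosyzygy $L$ of $N$, then run Eklof's transfinite lifting argument. The reduction, the base case, the limit step and the correction part of the successor step are all correct.

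There is one unjustified claim, in the successor step. You obtain a lift $h$ of $f|_{M_{\alpha+1}}$ on the grounds that $M_\alpha$ and $M_{\alpha+1}/M_\alpha$ both have vanishing $\mathrm{Ext}^1$ into $L$. But $\mathrm{Ext}^1_{\Lambda}(M_\alpha,L)=0$ is not a hypothesis, and your induction does not establish it. The induction only builds compatible lifts $g_\beta$ of the single fixed $f\colon M\to C$. Since a map $M_\alpha\to C$ need not extend to $M$, this says nothing about arbitrary maps $M_\alpha\to C$. Asserting $\mathrm{Ext}^1_{\Lambda}(M_\alpha,L)=0$ amounts to using the lemma itself for the truncated chain $(M_\beta\mid\beta\le\alpha)$, which would need a separate induction on the length of the chain.

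The detour through $h$ is unnecessary anyway. Since $E$ is injective, extend $g_\alpha$ directly to some $v\colon M_{\alpha+1}\to E$; your $h-u$ is just such a $v$. With $\pi\colon E\to C$ the projection, the map $\pi v-f|_{M_{\alpha+1}}$ vanishes on $M_\alpha$, so it factors through $M_{\alpha+1}/M_\alpha$. By the hypothesis on the factor it lifts to $E$, and subtracting this lift from $v$ gives $g_{\alpha+1}$ exactly as you describe.

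Alternatively, you can justify $h$ correctly. The obstruction class of $f|_{M_{\alpha+1}}$ in $\mathrm{Ext}^1_{\Lambda}(M_{\alpha+1},L)$ restricts to that of $f|_{M_\alpha}$, which is zero because $g_\alpha$ exists. Hence it lies in the image of $\mathrm{Ext}^1_{\Lambda}(M_{\alpha+1}/M_\alpha,L)=0$, so it vanishes.

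With either repair the proof is complete.
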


To simplify the proof  of the next theorem, we give the following notation.
\begin{nota}\label{nota1}  {\rm  \  Let  $\Lambda$ be a right  Noetherian ring. Then there is a
family ($E_{j})_{j\in J}$ of indecomposable injective right  $\Lambda$-modules such that every injective right  $\Lambda$-module is the direct sum
of copies of the various $E_{j}$ (see the proof of \cite[Theorem 5.4.1]{eno5}). For each $E_{j}$, take a  projective resolution $$ \cdots \stackrel {f_{j2}} \longrightarrow
{F_{j1}}\stackrel {f_{j1}} \longrightarrow {F_{j0}}\stackrel {f_{j0}}
\longrightarrow E_{j}  \longrightarrow  0  \eqno (\spadesuit_{j})$$
 of  $E_{j}$, and let
 $$ 0  \longrightarrow \Lambda_{\Lambda}\stackrel {h_{0}}
\longrightarrow
E_0\stackrel {h_{1}} \longrightarrow E_1\stackrel {h_{2}}
\longrightarrow \cdots \eqno (\clubsuit)$$
  be a minimal  injective resolution of   $\Lambda_{\Lambda}$. Put $\mathscr{X}$ = \{im$(f_{ji})$, im$(h_{i})$ \ $|$\  $i\geq 0, j\in J $\}}.\end{nota}

 Recall that a class   $\mathscr{C} \subseteq$    Mod-$\Lambda$    is  {\it  thick} if it is
closed under direct summands and has the two out of three property: for every exact
sequence $0\rightarrow  A   \rightarrow B \rightarrow  C\rightarrow 0   $ in  Mod-$\Lambda$  with two terms in  $\mathscr{C}$ , the third term belongs
to $\mathscr{C}$ as well. Now  we give a characterization of modules of virtually finite injective dimension   over right
Noetherian rings.

\begin{thm}\label{thm1}  Let  $\Lambda$ be a right  Noetherian ring and keep the notation as above. Then
$(^\perp(\mathscr{X}^\perp)$, $\mathscr{X}^\perp)$ is a hereditary complete
  cotorsion pair. Moreover,  $^\perp\mathscr{GI}$ = $^\perp(\mathscr{X}^\perp)$, i.e.,
 $\mathscr{X}^\perp$ = $\mathscr{GI}$. \end{thm}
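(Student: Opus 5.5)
The plan is to identify $\mathscr{X}^\perp$ with $\mathscr{GI}$ directly, and then read off everything else. Completeness comes from Eklof--Trlifaj and heredity from Proposition \ref{prop1}. First, $\mathscr{X}$ is a set, since $\Lambda$ is right Noetherian and the $E_j$ form a set. Hence the cotorsion pair $(^\perp(\mathscr{X}^\perp),\mathscr{X}^\perp)$ generated by $\mathscr{X}$ is complete by the Eklof--Trlifaj theorem. Lemma \ref{lem2} applies because $\mathrm{im}(h_0)\cong\Lambda\in\mathscr{X}$, and it describes the left class. Once $\mathscr{X}^\perp=\mathscr{GI}$ is proved, the pair coincides with $(^\perp\mathscr{GI},\mathscr{GI})$, so it is hereditary by Proposition \ref{prop1}.

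Write $\Omega^{-i}\Lambda=\mathrm{im}(h_i)$, so that $0\to\Omega^{-i}\Lambda\to E_i\to\Omega^{-i-1}\Lambda\to 0$ is exact. Since $\mathrm{im}(f_{ji})$ is the $i$-th syzygy of $E_j$, the conditions $N\in\mathscr{X}^\perp$ unwind as follows:
\begin{enumerate}
\item[(a)] ${\rm Ext}^n_\Lambda(E_j,N)=0$ for all $n\geq1$ and all $j$. Since ${\rm Ext}^n_\Lambda(\bigoplus E_j,N)=\prod{\rm Ext}^n_\Lambda(E_j,N)$, this gives ${\rm Ext}^{n}_\Lambda(I,N)=0$ for every injective $I$ and every $n\geq1$.
\item[(b)] For every $i\geq0$, every map $\Omega^{-i}\Lambda\to N$ extends to $E_i$. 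This follows from ${\rm Ext}^1_\Lambda(\Omega^{-i-1}\Lambda,N)=0$ and ${\rm Ext}^1_\Lambda(E_i,N)=0$.
\end{enumerate}

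\emph{Step 1: $\mathscr{GI}\subseteq\mathscr{X}^\perp$.} Let $G\in\mathscr{GI}$. The definition gives ${\rm Ext}^n_\Lambda(I,G)=0$ for injective $I$ and $n\geq1$, which covers the syzygies of the $E_j$. For the $\Omega^{-i}\Lambda$ I argue by induction on $i$, simultaneously for all Gorenstein injectives. The case $i=0$ is trivial since $\Lambda$ is projective. Write $0\to G'\to I\to G\to 0$ with $I$ injective and $G'\in\mathscr{GI}$. Given $\varphi:\Omega^{-i}\Lambda\to G$, it lifts to $I$ because ${\rm Ext}^1_\Lambda(\Omega^{-i}\Lambda,G')=0$ by induction. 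The lift then extends along $\Omega^{-i}\Lambda\to E_i$ because $I$ is injective. Composing with $I\to G$ gives (b) for $G$, hence ${\rm Ext}^1_\Lambda(\Omega^{-i-1}\Lambda,G)=0$.

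\emph{Step 2: $\mathscr{X}^\perp\subseteq\mathscr{GI}$.} This is the main step. For $N\in\mathscr{X}^\perp$, the right half of a complete injective resolution is any injective coresolution of $N$. It stays exact under ${\rm Hom}_\Lambda(I,-)$ by (a), since cosyzygies of $N$ inherit the vanishing of ${\rm Ext}^{\geq1}_\Lambda(I,-)$ by dimension shifting. For the left half, take an injective precover $\pi:I\to N$, which exists over a Noetherian ring, and set $N'=\ker\pi$.
\begin{itemize}
\item $\pi$ is surjective. By (b) with $i=0$, every element $\Lambda\to N$ extends to $E_0$, and so factors through $\pi$.
\item $N'$ satisfies (a). Hom-exactness of the precover gives ${\rm Ext}^1_\Lambda(E,N')=0$ for injective $E$, and the higher Ext groups shift to those of $N$.
\item $N'$ satisfies (b). In the exact sequence
\[{\rm Hom}_\Lambda(\Omega^{-i}\Lambda,I)\to{\rm Hom}_\Lambda(\Omega^{-i}\Lambda,N)\to{\rm Ext}^1_\Lambda(\Omega^{-i}\Lambda,N')\to{\rm Ext}^1_\Lambda(\Omega^{-i}\Lambda,I)=0,\]
the first map is onto. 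Indeed, any map $\Omega^{-i}\Lambda\to N$ extends to $E_i$ by (b), and then factors through the precover $\pi$. Hence ${\rm Ext}^1_\Lambda(\Omega^{-i}\Lambda,N')=0$ for all $i$.
\end{itemize}
So $N'\in\mathscr{X}^\perp$, and iterating produces the left half of a complete injective resolution. Splicing the two halves shows $N\in\mathscr{GI}$.

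The delicate point is this last bullet, where the $\Lambda$-cosyzygies in $\mathscr{X}$ are exactly what makes the class stable under passing to the kernel of the injective precover. Combining Steps 1 and 2 gives $\mathscr{X}^\perp=\mathscr{GI}$, hence $^\perp\mathscr{GI}={}^\perp(\mathscr{X}^\perp)$, and the first paragraph gives completeness and heredity.
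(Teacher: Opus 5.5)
Your proof is correct. Completeness is obtained exactly as in the paper. Your Step 2 is the paper's argument for $\mathscr{X}^\perp\subseteq\mathscr{GI}$ up to cosmetic changes:
\begin{itemize}
\item you get surjectivity of the injective precover elementwise, where the paper splits ${\rm im}(h_1)^{(\lambda)}$ off a pushout;
\item you use a precover and the long exact sequence, where the paper uses a cover and Wakamatsu's lemma;
\item you prove ${\rm Ext}^1_\Lambda({\rm im}(h_i),N')=0$ by lifting maps, where the paper dimension-shifts through ${\rm Ext}^2_\Lambda({\rm im}(h_i),K_0)$;
\item you splice the two halves by hand, where the paper cites Gillespie--Iacob.
\end{itemize}

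The genuine differences are in the other two parts. For $\mathscr{GI}\subseteq\mathscr{X}^\perp$, the paper puts the cosyzygies ${\rm im}(h_i)$ into ${}^\perp\mathscr{GI}$ using the thickness of that class, a result of \v{S}aroch--\v{S}\v{t}ov\'{\i}\v{c}ek. Your induction through $0\to G'\to I\to G\to 0$ uses only the definition of a Gorenstein injective module, so it is more elementary.

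For heredity, the paper argues intrinsically, before identifying $\mathscr{X}^\perp$. It shows $\mathscr{X}\subseteq{}^{\perp_\infty}(\mathscr{X}^\perp)$ and then propagates ${\rm Ext}^n$-vanishing along $\mathscr{X}$-filtrations via Lemmas \ref{lem2} and \ref{lem1}. You instead read heredity off Proposition \ref{prop1} once $\mathscr{X}^\perp=\mathscr{GI}$ is known.

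So both proofs lean on \v{S}aroch--\v{S}\v{t}ov\'{\i}\v{c}ek, but at different points. Yours uses it only for heredity, and is shorter. The paper's shows that heredity is a property of the cotorsion pair generated by $\mathscr{X}$ in its own right, independent of its identification with $(^\perp\mathscr{GI},\mathscr{GI})$.
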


\begin{proof}
Since $\mathscr{X}$ is a set, $(^\perp(\mathscr{X}^\perp)$, $\mathscr{X}^\perp)$ is a   complete
  cotorsion pair by  \cite[Theorem 7.4.1]{eno5}. Next we prove that it is hereditary.

  Since  each im$(f_{ji})$ belongs to $^\perp(\mathscr{X}^\perp)$, for any $n \geq 1$, we deduce from the exact sequence $(\spadesuit_{j})$
that ${\rm Ext}_{\Lambda}^{n}($im$(f_{ji})$, $Y)=0$ for all $Y \in \mathscr{X}^\perp$. Therefore, im$(f_{ji}) \in$  $^{\perp_{\infty}}(\mathscr{X}^\perp)$.
On the other hand, every injective right  $\Lambda$-module is the direct sum
of copies of the various $E_{j}$. Hence   ${\bf Inj}(\Lambda) \subseteq $  $^{\perp_{\infty}}(\mathscr{X}^\perp)$.
But then we see  from the exact sequence $(\clubsuit)$
that  im$(h_{i}) \in$  $^{\perp_{\infty}}(\mathscr{X}^\perp)$.
   In addition, the class  $^\perp(\mathscr{X}^\perp)$ consists of all direct summands of $\mathscr{X}$-filtered modules by Lemma \ref{lem2}.
These observations  together with Lemma \ref{lem1} tell us that, ${\rm Ext}_{\Lambda}^{n}($$B$, $Y)=0$ for all $B \in {^\perp(\mathscr{X}^\perp)}$, $Y \in \mathscr{X}^\perp$,  and $n \geq 1$.
So the pair $(^\perp(\mathscr{X}^\perp)$, $\mathscr{X}^\perp)$   is hereditary by \cite[Lemma 5.24]{gbe}.

Now we prove that  $\mathscr{X}^\perp$ = $\mathscr{GI}$.
Since injective modules are contained in the thick class $^\perp\mathscr{GI}$ (cf. \cite[Lemma 5.24]{ss}), we see from the exact sequences $(\spadesuit_{j})$  and $(\clubsuit)$
that $\mathscr{X} \subseteq {^\perp\mathscr{GI}}$. Hence $\mathscr{X}^\perp \supseteq {(^\perp\mathscr{GI}})^\perp$ = $\mathscr{GI}$ (cf. Proposition \ref{prop1}).

It remains to show that $\mathscr{X}^\perp $   $\subseteq \mathscr{GI}$. For any $M \in \mathscr{X}^\perp $, there is an exact sequence
$0\rightarrow  K    \rightarrow \Lambda^{(\lambda)} \rightarrow  M\rightarrow 0   $  for some set $\lambda$. Then the sequence
$0\rightarrow  \Lambda^{(\lambda)}  \rightarrow E_{0}^{(\lambda)} \rightarrow {\rm(im(}h_{1}))^{(\lambda)}\rightarrow 0   $  is also exact.
 Constructing the pushout of  $\Lambda^{(\lambda)}  \rightarrow E_{0}^{(\lambda)}$ along $\Lambda^{(\lambda)} \rightarrow  M$  yields the
following commutative diagram with exact
rows  and columns:

 $$\xymatrix{&&0\ar[d]&0\ar@{-->}[d]&\\0\ar[r]& K\ar[r]
\ar@{=}[d]&\Lambda^{(\lambda)}\ar[d]\ar[r]
&M\ar@{-->}[d]\ar[r]&0\\
0\ar@{-->}[r]&  K\ar@{-->}[r]
& E_{0}^{(\lambda)}\ar[d]\ar@{-->}[r]
& Q\ar@{-->}[d]\ar@{-->}[r]&0\\
 & &{\rm im(}h_{1})^{(\lambda)}\ar[d]\ar@{=}[r]&{\rm im(}h_{1})^{(\lambda)}\ar@{-->}[d]&&&\\
&&0&0}$$
Note that ${\rm Ext}_{\Lambda}^{1}($${\rm im(}h_{1})^{(\lambda)}$, $M)   \cong  $ $({\rm Ext}_{\Lambda}^{1}$${\rm(im(}h_{1})$, $M))^{\lambda} =  0$.
Thus  $Q \cong M \oplus {\rm im(}h_{1})^{(\lambda)}$, and we get   an epimorphism $E_{0}^{(\lambda)} \rightarrow  M$ with $E_{0}^{(\lambda)}  $ injective ($\Lambda$ is right  Noetherian).
Hence there exists a short exact sequence   $$0 \longrightarrow K_{0} \longrightarrow I_{0}  \stackrel {g_{0}}\longrightarrow M \longrightarrow 0$$
where  $g_{0}$ is an  injective cover.  Then $K_{0}\in {\bf Inj}(\Lambda)^\perp$ by \cite[Corollary 7.2.3]{eno5}. But   $M \in {\bf Inj}(\Lambda)^{\perp_{\infty}}$.
 Thus, one can see from the above short exact sequence that $K_{0} \in  {\bf Inj}(\Lambda)^{\perp_{\infty}}$. Hence  $ {\rm Ext}_{\Lambda}^{1}{\rm(im}(f_{ji}), K_{0}) \cong {\rm Ext}_{\Lambda}^{i+1}{\rm (}E_{j}, K_{0}) = 0.$

 For any $i \geq 1$, we have an exact sequence
  $$0 = {\rm Ext}_{\Lambda}^{1}{\rm(im}(h_{i}), M)  \rightarrow  {\rm Ext}_{\Lambda}^{2}{\rm(im}(h_{i}), K_{0})  \rightarrow  {\rm Ext}_{\Lambda}^{2}{\rm(im}(h_{i}), I_{0})  = 0.$$
So  $  {\rm Ext}_{\Lambda}^{2}{\rm(im}(h_{i}), K_{0})   = 0$.  Now consider the short exact sequence
$0\rightarrow  {\rm im(}h_{i - 1})  \rightarrow E_{i - 1}\rightarrow {\rm im(}h_{i})\rightarrow 0   $
which  induces an exact sequence  $$0 = {\rm Ext}_{\Lambda}^{1}(E_{i - 1}, K_{0})  \rightarrow  {\rm Ext}_{\Lambda}^{1}{\rm(im}(h_{i-1}), K_{0})  \rightarrow  {\rm Ext}_{\Lambda}^{2}{\rm(im}(h_{i}), K_{0})  = 0.$$
Thus $  {\rm Ext}_{\Lambda}^{1}{\rm(im}(h_{i-1}), K_{0})   = 0$ for any $i \geq 1$. This implies  $K_{0} \in \mathscr{X}^{\perp}$.
Recursively, we construct an exact sequence of right $\Lambda$-modules
$$ \cdots \stackrel {g_{2}} \longrightarrow
I_1\stackrel {g_{1}} \longrightarrow I_0\stackrel {g_{0}}
\longrightarrow M  \longrightarrow  0  $$
with $I_i \in {\bf Inj}(\Lambda)$ and ker$(g_i) \in {\bf Inj}(\Lambda)^{\perp_{\infty}}$ ($i \geq 0$).
Therefore, $M $   $\in \mathscr{GI}$ by \cite[Lemma 17(2)]{gii}, and   $\mathscr{X}^\perp $   $\subseteq \mathscr{GI}$, as required. \end{proof}

\begin{prop}\label{prop301}  Let  $\Lambda$ be an Artin algebra and keep the notation as above. Then
 $^\perp \mathscr{X} =$   $\mathscr{GP}$. \end{prop}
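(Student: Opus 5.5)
We prove the two inclusions separately. Throughout we write $\Omega^iE_j:={\rm im}(f_{ji})$ for the $i$-th syzygy of $E_j$, and $C^i:={\rm im}(h_i)$ for the $i$-th cosyzygy of $\Lambda$, so that $C^0=\Lambda$. We use two standard facts about an Artin algebra. First, there are only finitely many $E_j$, and each $E_j$ is finitely generated; we may take $(\spadesuit_j)$ with finitely generated $F_{ji}$. Second, projectives are closed under products, since $\Lambda$ is right perfect and left coherent (Chase).

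\textbf{The inclusion $\mathscr{GP}\subseteq{}^\perp\mathscr{X}$.} Let $G\in\mathscr{GP}$, so ${\rm Ext}^{\geq1}_\Lambda(G,P)=0$ for every projective $P$.

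For $Y=C^i$, dimension shifting along $(\clubsuit)$, which uses injectives in the second variable, gives ${\rm Ext}^1_\Lambda(G,C^i)\cong{\rm Ext}^{i+1}_\Lambda(G,\Lambda)=0$.

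For $Y=\Omega^iE_j$, write $G=\Omega^iH$ with $H\in\mathscr{GP}$, using the complete resolution. Shifting in the first variable gives ${\rm Ext}^1_\Lambda(\Omega^iH,\Omega^iE_j)\cong{\rm Ext}^{i+1}_\Lambda(H,\Omega^iE_j)$. Now shift in the second variable along $0\to\Omega^{k}E_j\to F_{j,k-1}\to\Omega^{k-1}E_j\to0$. Because ${\rm Ext}^{\geq1}_\Lambda(H,F)=0$ for projective $F$, this gives ${\rm Ext}^{m+1}_\Lambda(H,\Omega^kE_j)\cong{\rm Ext}^{m}_\Lambda(H,\Omega^{k-1}E_j)$ for $m\geq1$. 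Iterating, we reach ${\rm Ext}^1_\Lambda(H,E_j)=0$.

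\textbf{The inclusion ${}^\perp\mathscr{X}\subseteq\mathscr{GP}$.} Let $M\in{}^\perp\mathscr{X}$. The same shift along $(\clubsuit)$ gives ${\rm Ext}^{i+1}_\Lambda(M,\Lambda)\cong{\rm Ext}^1_\Lambda(M,C^i)=0$, so $M\in{}^{\perp_\infty}\Lambda$.

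We upgrade this to $M\in{}^{\perp_\infty}{\bf Proj}(\Lambda)$. A finitely generated module over an Artin algebra is endofinite, hence $\Sigma$-pure-injective. So $\Lambda^{(I)}$ is a direct summand of $\Lambda^I$, and ${\rm Ext}^n_\Lambda(M,\Lambda^I)\cong{\rm Ext}^n_\Lambda(M,\Lambda)^I$. The same trick gives ${\rm Ext}^1_\Lambda(M,Y^{(I)})=0$ for each $Y\in\mathscr{X}$ that is finitely generated, in particular for the $\Omega^iE_j$.

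Next we build the right half of a complete resolution. Since products of projectives are projective, ${\bf Proj}(\Lambda)$ is preenveloping. Take a ${\bf Proj}$-preenvelope $\varphi:M\to P^0$.

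We check that $\varphi$ is monic. Let $M\to I$ be an injective envelope, with $I\cong\bigoplus E_j^{(I_j)}$. From ${\rm Ext}^1_\Lambda(M,\bigoplus(\Omega^1E_j)^{(I_j)})=0$, every map $M\to I$ lifts through the projective cover $\bigoplus F_{j0}^{(I_j)}\to I$. Hence the monomorphism $M\to I$ factors through a projective module, and therefore through $\varphi$, so $\varphi$ is monic.

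Let $N={\rm coker}\,\varphi$. We show $N\in{}^\perp\mathscr{X}$.
\begin{itemize}
\item ${\rm Ext}^1_\Lambda(N,\Lambda)=0$, because $\varphi$ is a preenvelope and ${\rm Ext}^1_\Lambda(M,\Lambda)=0$.
\item ${\rm Ext}^{k}_\Lambda(N,\Lambda)\cong{\rm Ext}^{k-1}_\Lambda(M,\Lambda)=0$ for $k\geq2$. Hence $N\in{}^{\perp_\infty}\Lambda$, and by the trick above $N\in{}^{\perp_\infty}{\bf Proj}(\Lambda)$.
\item ${\rm Ext}^1_\Lambda(N,C^i)\cong{\rm Ext}^{i+1}_\Lambda(N,\Lambda)=0$.
\item For $Y=\Omega^iE_j$, shift in the second variable along $0\to\Omega^{i+1}E_j\to F_{ji}\to\Omega^iE_j\to0$, using $N\in{}^{\perp_\infty}{\bf Proj}(\Lambda)$, and then in the first variable along $0\to M\to P^0\to N\to0$. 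This gives
$${\rm Ext}^1_\Lambda(N,\Omega^iE_j)\cong{\rm Ext}^2_\Lambda(N,\Omega^{i+1}E_j)\cong{\rm Ext}^1_\Lambda(M,\Omega^{i+1}E_j)=0.$$
\end{itemize}
Thus $N$ again lies in ${}^\perp\mathscr{X}$, and we can iterate. This yields an exact sequence $0\to M\to P^0\to P^1\to\cdots$ with all cokernels in ${}^{\perp_\infty}{\bf Proj}(\Lambda)$.

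Splice this with a projective resolution of $M$; the syzygies of $M$ lie in ${}^{\perp_\infty}{\bf Proj}(\Lambda)$ automatically. The result is an exact complex of projectives all of whose images lie in ${}^{\perp_\infty}{\bf Proj}(\Lambda)$, so it stays exact under ${\rm Hom}_\Lambda(-,Q)$ for every projective $Q$. Hence it is a complete projective resolution and $M\in\mathscr{GP}$.

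The main obstacle is passing from ${\rm Ext}$-vanishing against the individual modules in $\mathscr{X}$ to vanishing against arbitrary direct sums of them, in particular arbitrary projectives and arbitrary injectives. I plan to settle this with endofiniteness ($\Sigma$-pure-injectivity) of finitely generated modules over an Artin algebra, together with the finiteness of the family $(E_j)$. The second delicate point is showing that the ${\bf Proj}$-preenvelope $\varphi$ is monic, which is exactly where the vanishing against $\Omega^1E_j$ is used.
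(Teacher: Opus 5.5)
Your proof is correct and follows essentially the route the paper intends. The paper's proof just says ``argue as in Theorem~\ref{thm1}'', i.e.\ dualize that proof: get $\mathscr{GP}\subseteq{}^\perp\mathscr{X}$ from $\mathscr{X}\subseteq\mathscr{GP}^\perp$, and for the converse embed $M$ into a projective by lifting through $F_{j0}\to E_j$, take a projective (pre)envelope, check by dimension shifting that the cokernel stays in ${}^\perp\mathscr{X}$, and iterate. Your differences are minor variations (direct sums plus $\Sigma$-pure-injectivity of finitely generated modules instead of products of projectives, and direct dimension shifting instead of the thick class $\mathscr{GP}^\perp$), and your explicit upgrade from ${}^{\perp_\infty}\Lambda$ to ${}^{\perp_\infty}{\bf Proj}(\Lambda)$ supplies a step the paper leaves implicit.
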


\begin{proof} Since $\Lambda$ is an Artin algebra, the set $J$  in Notation \ref{nota1} is finite.
In addition,   every  module over an Artin algebra admits a  projective envelope by \cite[Proposition 3.5]{am}, and  ${\bf Proj}(\Lambda)$
is  closed under direct products.
 Then the argument similar to that of the previous
theorem shows that $^\perp \mathscr{X} =$   $\mathscr{GP}$.
\end{proof}
\medskip

\section{ Virtually Gorenstein Artin algebras are weakly Gorenstein}

We begin with the following   remark.

\begin{rem}\label{rem21}  {\rm Let $\mathscr{X}$ be the set as in  Notation \ref{nota1}.  If    $\Lambda$ is an Artin algebra,
then we can always assume that $\mathscr{X} \subseteq $ mod-$\Lambda$.}\end{rem}

In view of the above remark, we get the following lemma.
\begin{lem}\label{lem22}   Let    $\Lambda$ be a virtually
Gorenstein Artin algebra and $\mathscr{X}$ be the set as in {\rm Notation \ref{nota1}}. Then, for any  $M\in $  {\rm mod}-$\Lambda$, there are two short exact sequences in  {\rm mod}-$\Lambda$
 $$0 \rightarrow M  \rightarrow Q  \rightarrow B \rightarrow 0 \ \ \ \ \ \ and  \ \ \ \ \ \ 0 \rightarrow M  \rightarrow G  \rightarrow C \rightarrow 0$$
 where  $Q  $ and  $C  $ belong to {\rm add(filt($\mathscr{X}$))},  $B\in $  $\mathscr{GP}$ and $G\in $  $\mathscr{GI}$. \end{lem}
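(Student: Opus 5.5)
The plan is to obtain both sequences from approximation theory inside mod-$\Lambda$, and then to use Lemma \ref{lem21} to identify the middle and end terms with add(filt($\mathscr{X}$)). First, by Theorem \ref{thm1} and the virtual Gorenstein hypothesis we have
$$^\perp(\mathscr{X}^\perp) = {^\perp\mathscr{GI}} = \mathscr{GP}^\perp .$$
By Remark \ref{rem21}, $\mathscr{X}\subseteq$ mod-$\Lambda$, and $\Lambda = {\rm im}(h_0)\cong \Lambda_\Lambda \in \mathscr{X}$. Hence Lemma \ref{lem21} says that add(filt($\mathscr{X}$)) is exactly the class of finitely generated modules in $\mathscr{GP}^\perp = {^\perp\mathscr{GI}}$. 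It therefore suffices to produce, inside mod-$\Lambda$, sequences $0\to M\to Q\to B\to 0$ with $B\in\mathscr{GP}$ and $Q\in\mathscr{GP}^\perp$, and $0\to M\to G\to C\to 0$ with $G\in\mathscr{GI}$ and $C\in{^\perp\mathscr{GI}}$.

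For the first sequence I would invoke the known characterization (Beligiannis, Beligiannis--Krause) that $\Lambda$ is virtually Gorenstein exactly when $\mathscr{GP}\cap$ mod-$\Lambda$ is contravariantly finite in mod-$\Lambda$. This class is resolving and contains $\Lambda$. By Auslander--Reiten theory it is then also covariantly finite, and the Wakamatsu lemma gives a special left approximation $0\to M\to Q\to B\to 0$ in mod-$\Lambda$ with $B\in\mathscr{GP}\cap$ mod-$\Lambda$ and ${\rm Ext}^1_\Lambda(B',Q)=0$ for every $B'\in \mathscr{GP}\cap$ mod-$\Lambda$.

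The key step, which I expect to be the main obstacle, is upgrading this vanishing to ${\rm Ext}^1_\Lambda(G,Q)=0$ for \emph{all} $G\in\mathscr{GP}$. I would use the further fact from \cite{bel} that over a virtually Gorenstein algebra every Gorenstein projective module is a direct limit of finitely generated Gorenstein projectives. Since $Q$ is finitely generated over an Artin algebra, it is (endofinite, hence) pure-injective. Writing $G=\varinjlim G_i$ with the canonical pure exact presentation $0\to K\to\bigoplus G_i\to G\to 0$, the functor ${\rm Hom}(-,Q)$ is exact on this presentation. Combined with ${\rm Ext}^1(G_i,Q)=0$, a dimension-shifting argument then yields ${\rm Ext}^1(G,Q)=0$; alternatively one can use the standard isomorphism ${\rm Ext}^1(\varinjlim G_i,Q)\cong\varprojlim {\rm Ext}^1(G_i,Q)$ for pure-injective $Q$. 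Thus $Q\in\mathscr{GP}^\perp$, and hence $Q\in$ add(filt($\mathscr{X}$)).

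For the second sequence I would dualize. By \cite[Theorem 8.7]{bel}, $\Lambda^{\rm op}$ is virtually Gorenstein, so the dual of the previous construction gives a special right $\mathscr{GP}$-approximation $0\to K\to B'\to DM\to 0$ in mod-$\Lambda^{\rm op}$ with $B'$ Gorenstein projective and $K\in\mathscr{GP}(\Lambda^{\rm op})^\perp$. Here I again use the limit argument to pass from finitely generated to arbitrary Gorenstein projectives. Applying $D$ gives $0\to M\to DB'\to DK\to 0$ in mod-$\Lambda$, and $G:=DB'$ is Gorenstein injective since $D$ exchanges finitely generated Gorenstein projectives and injectives. It remains to check $C:=DK\in\mathscr{GP}^\perp$, which reduces to a Tor computation. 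For $G''\in\mathscr{GP}$ we have ${\rm Ext}^1_\Lambda(G'',DK)\cong D\,{\rm Tor}_1^\Lambda(G'',K)$. Writing $G''=\varinjlim G''_i$ with $G''_i$ finitely generated Gorenstein projective, Tor commutes with direct limits, and ${\rm Tor}_1(G''_i,K)\cong D{\rm Ext}^1_{\Lambda^{\rm op}}(K,DG''_i)$. This last group I would show vanishes using ${\rm Ext}^1(\mathscr{GP}(\Lambda^{\rm op}),K)=0$ together with the virtually Gorenstein identity ${\rm Ext}^1(DG''_i,-)$ versus ${\rm Ext}^1(-,K)$ (i.e.\ $K\in\mathscr{GP}^\perp={^\perp\mathscr{GI}}$ over $\Lambda^{\rm op}$, and $DG''_i\in\mathscr{GI}$). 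Hence $C\in{^\perp\mathscr{GI}}\cap$ mod-$\Lambda$, and therefore $C\in$ add(filt($\mathscr{X}$)) by Lemma \ref{lem21}.
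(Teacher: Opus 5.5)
Your proof is correct, and its last step is the paper's: by Theorem~\ref{thm1}, the virtually Gorenstein hypothesis and Lemma~\ref{lem21}, add(filt($\mathscr{X}$)) is exactly the class of finitely generated modules in ${}^\perp\mathscr{GI}=\mathscr{GP}^\perp$. You also verify the hypothesis $\Lambda\cong{\rm im}(h_0)\in\mathscr{X}$ of Lemma~\ref{lem21}, which the paper leaves implicit.

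Where you differ is in producing the two sequences. The paper takes them ready-made from \cite[Theorem 8.2(vi)]{bel}, which for a virtually Gorenstein algebra already gives both sequences inside mod-$\Lambda$ with $Q,C\in{}^\perp\mathscr{GI}$. You rebuild them in two stages:
\begin{enumerate}
\item You take finitely generated approximations, using contravariant finiteness of $\mathscr{GP}\cap$ mod-$\Lambda$ and of its $\Lambda^{\rm op}$-analogue.
\item You upgrade orthogonality against finitely generated Gorenstein projectives to orthogonality against all of $\mathscr{GP}$. For $Q$ you use \cite[Theorem 5]{bek} together with pure-injectivity of finite length modules. For $C$ you use duality, ${\rm Tor}$ commuting with direct limits, and virtual Gorensteinness of $\Lambda^{\rm op}$.
\end{enumerate}
Both upgrades are sound. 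Your route is not more elementary, since it replaces one citation by two results of comparable depth. What it buys is that it shows the mechanism hidden in the citation: why finitely generated approximations are automatically orthogonal to infinitely generated Gorenstein projectives.

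Two wording fixes:
\begin{itemize}
\item You only need the implication ``virtually Gorenstein $\Rightarrow$ $\mathscr{GP}\cap$ mod-$\Lambda$ contravariantly finite'', not an equivalence.
\item Covariant finiteness of $\mathscr{GP}\cap$ mod-$\Lambda$ plays no role. The sequence $0\to M\to Q\to B\to 0$ is a special preenvelope with respect to $(\mathscr{GP}\cap{\rm mod}\mbox{-}\Lambda)^\perp$. It comes from contravariant finiteness alone, either by Auslander--Reiten theory or by pulling back a special precover of $E(M)/M$ along $E(M)\to E(M)/M$. Likewise, $0\to K\to B'\to DM\to 0$ is Wakamatsu's lemma applied to a minimal right approximation, not a ``dual'' of the first construction.
\end{itemize}
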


 \begin{proof} Since $\Lambda$ is  virtually
Gorenstein, we have a  hereditary  cotorsion triple ($\mathscr{GP}$,  $^\perp\mathscr{GI}$, $\mathscr{GI}$).
 Hence, from \cite[Theorem 8.2(vi)]{bel}, we get two short exact sequences in  {\rm mod}-$\Lambda$
 $$0 \rightarrow M  \rightarrow Q  \rightarrow B \rightarrow 0 \ \ \ \ \ \ and  \ \ \ \ \ \ 0 \rightarrow M  \rightarrow G  \rightarrow C \rightarrow 0$$
 with  $Q  $ and  $C  $ belong to $^\perp\mathscr{GI}$,  $B\in $  $\mathscr{GP}$ and $G\in $  $\mathscr{GI}$.
Note that  $^\perp\mathscr{GI}$ = $^\perp(\mathscr{X}^\perp)$ by Theorem \ref{thm1}. It follows from   Lemma  \ref{lem21} that  $Q  $ and  $C  $ are contained in  {\rm add(filt($\mathscr{X}$))},
finishing the proof.\end{proof}

To determine the  projective and injective dimensions for   modules over a  virtually
Gorenstein Artin algebra,  we need three auxiliary lemmas.

\begin{lem}\label{lem81}   Let    $\Lambda$ be a virtually
Gorenstein Artin algebra and   $M\in $  {\rm Mod}-$\Lambda$. If   $M \in $ ${D(\Lambda^{{\rm op}})}^{\perp_{\infty}}\ \cap $ $\mathscr{GP}^\perp$,
  then $M $ is injective.\end{lem}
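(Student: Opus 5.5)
The plan is to show that $M$ is Gorenstein injective, and then that it lies in $\mathscr{GI}\cap{}^\perp\mathscr{GI}$, which consists of injective modules. First, $M\in{\bf Inj}(\Lambda)^{\perp_{\infty}}$. Over an Artin algebra every injective module is a direct sum of copies of the finitely many indecomposable injectives $E_j$, and each $E_j$ is a direct summand of $D(\Lambda^{\rm op})$. Since ${\rm Ext}^i_\Lambda(\bigoplus E,M)\cong\prod {\rm Ext}^i_\Lambda(E,M)$, the hypothesis $M\in D(\Lambda^{\rm op})^{\perp_\infty}$ gives ${\rm Ext}^i_\Lambda(I,M)=0$ for every injective $I$ and every $i\geq 1$. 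Second, since $\Lambda$ is virtually Gorenstein, $\mathscr{GP}^\perp={}^\perp\mathscr{GI}$, so $M\in{}^\perp\mathscr{GI}$.

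Next I observe that ${}^\perp\mathscr{GI}\cap\mathscr{GI}={\bf Inj}(\Lambda)$. Let $G\in\mathscr{GI}$ and take $0\to G'\to I\to G\to 0$ with $I$ injective and $G'\in\mathscr{GI}$, which exists by the definition of $\mathscr{GI}$. If also $G\in{}^\perp\mathscr{GI}$, then ${\rm Ext}^1_\Lambda(G,G')=0$, so the sequence splits and $G$ is a direct summand of $I$, hence injective. Consequently it suffices to prove $M\in\mathscr{GI}$. By Theorem \ref{thm1} this is equivalent to $M\in\mathscr{X}^\perp$, i.e. ${\rm Ext}^1_\Lambda(X,M)=0$ for every $X\in\mathscr{X}$.

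I check this on the generators of $\mathscr{X}$ in turn.
\begin{enumerate}
\item For $X={\rm im}(f_{ji})$, dimension shifting along $(\spadesuit_j)$ gives ${\rm Ext}^1_\Lambda({\rm im}(f_{ji}),M)\cong{\rm Ext}^{i+1}_\Lambda(E_j,M)=0$ by the first step.
\item For $X={\rm im}(h_0)=\Lambda$, the vanishing is trivial.
\item For $X={\rm im}(h_i)$ with $i\geq1$, each $E_i$ lies in ${}^{\perp_\infty}M$. Dimension shifting along $(\clubsuit)$ therefore gives ${\rm Ext}^n_\Lambda({\rm im}(h_i),M)\cong{\rm Ext}^{n+1}_\Lambda({\rm im}(h_{i+1}),M)$ for all $n\geq1$. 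It remains to kill these groups, and the hypothesis $M\in\mathscr{GP}^\perp$ has to enter here.
\end{enumerate}

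For item 3, I first note that the cosyzygies ${\rm im}(h_i)$ lie in the thick class ${}^\perp\mathscr{GI}=\mathscr{GP}^\perp$, since $\Lambda$ and the $E_i$ do. Because $(\mathscr{GP},\mathscr{GP}^\perp)$ is a complete hereditary cotorsion pair over a virtually Gorenstein algebra, every module $Y\in\mathscr{GP}^\perp$ has a special $\mathscr{GP}$-precover $0\to K\to P\to Y\to0$ with $K\in\mathscr{GP}^\perp$. Then $P\in\mathscr{GP}\cap\mathscr{GP}^\perp={\bf Proj}(\Lambda)$. Iterating, every $Y\in{}^\perp\mathscr{GI}$ has a projective resolution all of whose syzygies stay in ${}^\perp\mathscr{GI}$; dually, special $\mathscr{GI}$-preenvelopes (Proposition \ref{prop1}) give injective coresolutions whose cosyzygies stay in ${}^\perp\mathscr{GI}$.

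Applying the dual construction to $M$ gives $0\to M\to I\to L\to0$ with $I$ injective and $L\in{}^\perp\mathscr{GI}$. I would try to show ${\rm Ext}^1_\Lambda(L,M)=0$, using that $L$ lies in the thick subcategory generated by projectives and injectives inside ${}^\perp\mathscr{GI}$, against which $M$ is Ext-orthogonal in positive degrees. This vanishing would make the sequence split and exhibit $M$ directly as a summand of $I$. Equivalently, it would give ${\rm Ext}^1_\Lambda({\rm im}(h_i),M)=0$ and hence $M\in\mathscr{X}^\perp=\mathscr{GI}$.

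I expect this last Ext-vanishing to be the main obstacle. The two orthogonality conditions on $M$, with respect to injectives and with respect to Gorenstein projectives, must be combined through the cotorsion triple $(\mathscr{GP},{}^\perp\mathscr{GI},\mathscr{GI})$. My first attempt would be to express $L$ via finitely many steps of special $\mathscr{GP}$-precovers and $\mathscr{GI}$-preenvelopes, using Lemma \ref{lem22} when passing to finitely generated pieces and Lemma \ref{lem1} to pass to filtrations, and then to conclude by dimension shifting.
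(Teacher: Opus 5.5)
Your preliminary reductions are correct: $M\in{\bf Inj}(\Lambda)^{\perp_\infty}$, $M\in{}^\perp\mathscr{GI}$, ${}^\perp\mathscr{GI}\cap\mathscr{GI}={\bf Inj}(\Lambda)$, and by Theorem \ref{thm1} everything reduces to ${\rm Ext}^1_\Lambda({\rm im}(h_i),M)=0$ for $i\geq 1$ (equivalently, to the splitting of $0\to M\to I\to L\to 0$). But that vanishing is the whole content of the lemma, and the proposal does not prove it.

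The mechanism you suggest cannot work. The class ${}^{\perp_\infty}M$ is closed under extensions, direct summands and kernels of epimorphisms, but not under cokernels of monomorphisms. So orthogonality of $M$ to projectives and injectives does not pass to the thick subcategory they generate, even granting that $L$ lies there. Concretely, ${\rm Ext}^1_\Lambda({\rm im}(h_1),M)\cong{\rm coker}\big({\rm Hom}_\Lambda(E_0,M)\to{\rm Hom}_\Lambda(\Lambda,M)\big)$. Dimension shifting along $(\clubsuit)$ only moves this group to higher degrees without killing it. Likewise, applying Lemma \ref{lem1} to an $\mathscr{X}$-filtration of $L$ would need ${\rm Ext}^1_\Lambda(X,M)=0$ for all $X\in\mathscr{X}$, which is circular. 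Note also that your plan uses $M\in\mathscr{GP}^\perp$ only to place $M$ in ${}^\perp\mathscr{GI}$. Its decisive role, killing ${\rm Ext}^{\geq 1}_\Lambda(B,M)$ for the Gorenstein projective end terms of finitely generated approximations, never appears.

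The missing idea is to aim for \emph{finite injective dimension}. There only high-degree vanishing is needed, and the degree-one cokernel obstruction disappears.
\begin{enumerate}
\item Dimension shifting along $(\spadesuit_j)$ and $(\clubsuit)$ shows that every $X\in\mathscr{X}$ has ${\rm Ext}^n_\Lambda(X,M)=0$ for all large $n$ (for instance $n\geq m+1$ when $X={\rm im}(h_m)$).
\item For each simple $S$, Lemma \ref{lem22} gives $0\to S\to Q\to B\to 0$ in mod-$\Lambda$ with $Q\in{\rm add}({\rm filt}(\mathscr{X}))$ and $B\in\mathscr{GP}$.
\item Since $Q$ is a summand of a \emph{finitely} $\mathscr{X}$-filtered module, there is a uniform $t_S$ with ${\rm Ext}^n_\Lambda(Q,M)=0$ for $n>t_S$. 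Since $\mathscr{GP}^\perp=\mathscr{GP}^{\perp_\infty}$, the same holds for $S$.
\item There are finitely many simples and every finitely generated module has a composition series, so ${\rm id}_\Lambda(M)<\infty$.
\item Finite injective dimension together with $M\in{\bf Inj}(\Lambda)^{\perp_\infty}$ forces $M$ to be injective. In your framework: ${\rm Ext}^1_\Lambda({\rm im}(h_i),M)\cong{\rm Ext}^{1+k}_\Lambda({\rm im}(h_{i+k}),M)=0$ for $k\geq{\rm id}_\Lambda(M)$.
\end{enumerate}
This is the paper's argument. Note that Lemma \ref{lem22} is applied to the simple modules, not to the large module $L$ as you propose.
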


 \begin{proof}
Since  $\Lambda$ is an Artin algebra, there is a
family ($E_{j})_{j\in J}$ of finitely generated injective right  $\Lambda$-modules such that every injective right  $\Lambda$-module is the direct sum
of copies of the various $E_{j}$.
    Note that  $D(\Lambda^{{\rm op}})  $ is
an injective cogenerator and  $M \in $ ${D(\Lambda^{{\rm op}})}^{\perp_{\infty}} $.  So $M \in $  ${E_{j}}^{\perp_{\infty}}$  for all  $j \in J$. It follows that     $M \in $  {\bf Inj}$(\Lambda)^{\perp_{\infty}}$. We then deduce from
the  exact sequences $(\spadesuit_{j})$ and  $(\clubsuit)$
 in Notation \ref{nota1}  that $ {\rm Ext}_{\Lambda}^{l}{\rm(im}(f_{ji}), M) \cong {\rm Ext}_{\Lambda}^{l+i}{\rm (}E_{j}, M) = 0 $ ($j\in J$)
 and $ {\rm Ext}_{\Lambda}^{l+1}{\rm(im}(h_{1}), M) \cong {\rm Ext}_{\Lambda}^{l+1}{\rm (}E_{0}, M) = 0 $ for all $l \geq  1$, respectively.
 Then  $ {\rm Ext}_{\Lambda}^{l+m}{\rm(im}(h_{m}), M) \cong {\rm Ext}_{\Lambda}^{l+1}{\rm(im}(h_{1}), M) = 0 $ for all $l \geq  1$ and every $m \geq  2$.

 Let $\mathscr{S}$ be an irredundant finite set   of representatives of the simple modules in Mod-$\Lambda$.  For any    $S\in $  $\mathscr{S}$,  there is a short exact sequence
$0\rightarrow  S    \rightarrow Q \rightarrow  B\rightarrow 0   $ with  $Q\in $    add(filt($\mathscr{X}$))   and   $B\in $  $\mathscr{GP}$ by  Lemma  \ref{lem22}.
Then $Q$ is  isomorphic to a  direct summand of some module $Y$  in
 filt($\mathscr{X}$). There exists a submodule chain   $$0 = Y_{0} \subseteq Y_{1} \subseteq \cdots \subseteq Y_{n-1} \subseteq  Y_{n} = Y$$
 of  finite length such that
  each of the modules $L_{ i }:= Y_{ i }/Y_{i-1}  $  ($1\leq i \leq n$) is isomorphic to an element of $\mathscr{X}$.
In the previous paragraph we have shown that, for every $L_{ i }$, there exists some positive integer $t_{i}$ such that   {\rm Ext}$ _{\Lambda}^{t_{i}+l}(L_{ i }, M)=0$  for all  $l \geq 1$.
So we can immediately deduce  from the short exact sequence
$0\rightarrow L_{1} = Y_{ 1 }    \rightarrow Y_{ 2 } \rightarrow  L_{2}\rightarrow 0   $ that
 {\rm Ext}$ _{\Lambda}^{t_{S}+l}(Y_{2}, M)=0$  for all  $l \geq 1$, where $t_{S}$ = sup\{$t_{i}$ \ $|$\   $1\leq i \leq n$ \} $  < \infty$.
Inductively, we get  that {\rm Ext}$ _{\Lambda}^{t_{S}+l}(Y, M)=0$  for all  $l \geq 1$.
It follows that  {\rm Ext}$ _{\Lambda}^{t_{S}+l}(Q, M)=0$  for all  $l \geq 1$.
In addition,  $M \in $   $\mathscr{GP}^\perp = \mathscr{GP}^{\perp_{\infty}}$  (cf. \cite[Theorem 3.5(i)]{bel} and   $B \in \mathscr{GP}$.  So
   $M \in B^{\perp_{\infty}}$. Hence  we see  from the short exact sequence $0\rightarrow  S    \rightarrow Q \rightarrow  B\rightarrow 0   $
that {\rm Ext}$ _{\Lambda}^{t_{S}+l}(S, M)=0$  for all  $l \geq 1$.

Since the set   $\mathscr{S}$ is finite,   $t$ = sup\{$t_{S} + 1$ \ $|$\   $S\in $  $\mathscr{S}$ \} $  < \infty$. In addition,  any $V \in$ {\rm mod}-$\Lambda$
 is  finitely  $\mathscr{S}$-filtered by \cite[Proposition
11.1]{and}. Thus, as above, one can easily obtain that  {\rm Ext}$ _{\Lambda}^{t}(V, M)=0$  for any
$V \in$ {\rm mod}-$\Lambda$. Therefore, {\rm id}$_{\Lambda}$($M$) $\leq t <\infty$, and we get an exact sequence
 $$0  \longrightarrow{M}\stackrel {g_{0}} \longrightarrow{I_{0}}\stackrel {g_{1}} \longrightarrow \cdots \stackrel {g_{t}} \longrightarrow
{I_{t}} \longrightarrow  0  $$ with   $I_{i} $ injective ($0\leq i \leq t$).
But we have shown that   $M \in $  {\bf Inj}$(\Lambda)^{\perp_{\infty}}$. Therefore, applying the functor $ {\rm Hom}_{\Lambda}(-, M)$ to the
exact sequence
 $$0  \longrightarrow{{\rm im}(g_{1})}\stackrel {g_{1}} \longrightarrow{I_{1}}\stackrel {g_{2}} \longrightarrow \cdots \stackrel {g_{t}} \longrightarrow
{I_{t}} \longrightarrow  0,  $$
we obtain that
$ {\rm Ext}_{\Lambda}^{1}{\rm(im}(g_{1}), M) \cong {\rm Ext}_{\Lambda}^{t}{\rm (}I_{t}, M) = 0.$  This implies that $M $ is  a direct summand of the
injective module $I_{0} $,  finishing  the proof.
 \end{proof}

\begin{lem}\label{lem82}   Let    $\Lambda$ be a virtually
Gorenstein Artin algebra and   $M\in $  {\rm Mod}-$\Lambda$. If   $M \in $ ${D(\Lambda^{{\rm op}})}^{\perp_{\infty}} $,
  then $M $   $\in $  $\mathscr{GI}$.\end{lem}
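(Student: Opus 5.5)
We reduce to Lemma \ref{lem81} via the complete cotorsion pair $(^\perp\mathscr{GI},\mathscr{GI})$ of Proposition \ref{prop1}. Since $\Lambda$ is virtually Gorenstein, $^\perp\mathscr{GI}=\mathscr{GP}^\perp$. Take a special $\mathscr{GI}$-preenvelope of $M$:
$$0\longrightarrow M\longrightarrow G\longrightarrow L\longrightarrow 0,\qquad G\in\mathscr{GI},\ L\in {^\perp\mathscr{GI}}=\mathscr{GP}^\perp .$$
The strategy is to show that $L$ is injective. Then the sequence is an injective coresolution step whose cokernel is injective, and $M$ inherits Gorenstein injectivity from $G$.

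\textbf{Step 1: $L\in D(\Lambda^{\rm op})^{\perp_\infty}$.} Every Gorenstein injective module lies in ${\bf Inj}(\Lambda)^{\perp_\infty}$, so $G\in D(\Lambda^{\rm op})^{\perp_\infty}$. By hypothesis $M\in D(\Lambda^{\rm op})^{\perp_\infty}$ as well. The long exact sequence for ${\rm Hom}_\Lambda(D(\Lambda^{\rm op}),-)$ gives $\Ext^i_\Lambda(D(\Lambda^{\rm op}),L)=0$ for all $i\geq 1$, since each such group sits between $\Ext^i(D(\Lambda^{\rm op}),G)=0$ and $\Ext^{i+1}(D(\Lambda^{\rm op}),M)=0$.

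\textbf{Step 2: $L$ is injective.} By Step 1, $L\in D(\Lambda^{\rm op})^{\perp_\infty}\cap\mathscr{GP}^\perp$, so Lemma \ref{lem81} applies and $L$ is injective.

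\textbf{Step 3: $M\in\mathscr{GI}$.} We have $0\to M\to G\to L\to 0$ with $G\in\mathscr{GI}$ and $L$ injective. Since $\mathscr{GI}$ is closed under kernels of epimorphisms between its members (the cotorsion pair of Proposition \ref{prop1} is hereditary and injectives lie in $\mathscr{GI}$), $M\in\mathscr{GI}$. The same conclusion can be reached directly: splice the coresolution $0\to M\to G\to L\to 0$ with a left complete injective resolution of $G$, and observe that $\Hom(E,-)$-exactness is preserved because $\Ext^1(E,M)=0$ for injective $E$, which holds since $M\in{\bf Inj}^{\perp_\infty}$.

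\textbf{Main obstacle.} The only nontrivial input is Step 2, which is supplied by Lemma \ref{lem81}. The one point to check carefully is that the identification $^\perp\mathscr{GI}=\mathscr{GP}^\perp$ is exactly the virtually Gorenstein hypothesis, so that $L$ lands in $\mathscr{GP}^\perp$ as Lemma \ref{lem81} requires.
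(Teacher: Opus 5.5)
Your Steps 1 and 2 are correct. They differ from the paper only in the approximation sequence you start from. You take the special $\mathscr{GI}$-preenvelope $0\to M\to G\to L\to 0$. The paper takes the special ${}^\perp\mathscr{GI}$-precover $0\to K\to I\to M\to 0$ with $K\in\mathscr{GI}$. It shows $I\in D(\Lambda^{\rm op})^{\perp_\infty}\cap\mathscr{GP}^\perp$, so $I$ is injective by Lemma~\ref{lem81}. Then $M$ is the cokernel of a monomorphism between Gorenstein injective modules, which is exactly the closure property that hereditariness provides.

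The gap is in Step 3.

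\textbf{Your main justification is false.} Hereditariness of $({}^\perp\mathscr{GI},\mathscr{GI})$ gives closure of $\mathscr{GI}$ under cokernels of monomorphisms, not kernels of epimorphisms. The latter fails even for virtually Gorenstein Artin algebras. Take a non-semisimple hereditary Artin algebra; it is Gorenstein, hence virtually Gorenstein, and $\mathscr{GI}={\bf Inj}(\Lambda)$. Yet $\Lambda_\Lambda=\ker(E_0\to E_1)$ is the kernel of an epimorphism between injectives and is not injective.

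\textbf{Your fallback ``splice'' argument is not yet a proof.} Splicing $0\to M\to G\to L\to 0$ with a left injective resolution of $G$ does not by itself give a left injective resolution of $M$. You would need a pullback and a separate argument that the pulled-back term is injective.

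\textbf{The repair is simpler and uses the fact you already name.} Over an Artin algebra every injective module is a direct sum of direct summands of $D(\Lambda^{\rm op})$. Also $\Ext^i_\Lambda(\bigoplus_\alpha E_\alpha,M)\cong\prod_\alpha\Ext^i_\Lambda(E_\alpha,M)$. Hence the hypothesis gives $M\in{\bf Inj}(\Lambda)^{\perp_\infty}$. Since $L$ is injective, $\Ext^1_\Lambda(L,M)=0$, so the sequence $0\to M\to G\to L\to 0$ splits. Thus $M$ is a direct summand of $G$. Finally $\mathscr{GI}=({}^\perp\mathscr{GI})^\perp$ is closed under direct summands, so $M\in\mathscr{GI}$.

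With that replacement your proof is complete. The paper's choice of the precover avoids the issue entirely, because its final step uses the closure property the cotorsion pair actually has.
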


\begin{proof} By Proposition \ref{prop1}, there is a short exact sequence
$0\rightarrow  K    \rightarrow I \rightarrow  M\rightarrow 0   $ with  $I \in $    $^\perp\mathscr{GI}$ = $^{\perp_{\infty}}{\mathscr{GI}}$   and   $ K \in $  $\mathscr{GI} \subseteq$ ${\bf Inj}(\Lambda)^{\perp_{\infty}}$.
Then $I \in $ ${D(\Lambda^{{\rm op}})}^{\perp_{\infty}} $ since $M \in $ ${D(\Lambda^{{\rm op}})}^{\perp_{\infty}} $.
But $^{\perp}{\mathscr{GI}}$ = $\mathscr{GP}^\perp$ as $\Lambda$ is virtually
Gorenstein. Hence  $I \in $ ${D(\Lambda^{{\rm op}})}^{\perp_{\infty}}\ \cap $ $\mathscr{GP}^\perp$.
It follows from  Lemma  \ref{lem81} that $I  $ is injective.
Therefore, $M $   $\in \mathscr{GI}$  since the cotorsion pair $(^\perp\mathscr{GI}$, $\mathscr{GI})$ is   hereditary by Proposition  \ref{prop1}, as required. \end{proof}

\begin{lem}\label{lem83}   Let    $\Lambda$ be a virtually
Gorenstein Artin algebra and   $M\in $  {\rm Mod}-$\Lambda$. If   $M \in $ ${\mathscr{GI}}^{\perp_{\infty}} $,
  then $M $   is injective.\end{lem}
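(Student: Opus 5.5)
We need to show: if $M \in \mathscr{GI}^{\perp_{\infty}}$, i.e. $\mathrm{Ext}^i(G,M)=0$ for all Gorenstein injective $G$ and all $i\ge 1$, then $M$ is injective. Since injective modules are Gorenstein injective, we have $M\in {\bf Inj}(\Lambda)^{\perp_\infty}\subseteq D(\Lambda^{\rm op})^{\perp_\infty}$.

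The plan is to apply Lemma \ref{lem82}, which gives $M\in\mathscr{GI}$. Then $\mathrm{Ext}^1_\Lambda(M,M)=0$ by the hypothesis applied with $G=M$. To conclude that $M$ is injective, take a short exact sequence $0\to M\to E\to N\to 0$ with $E$ injective, for instance an injective envelope. Because $\mathscr{GI}$ is closed under cokernels of monomorphisms between its members (it is the right half of the hereditary cotorsion pair of Proposition \ref{prop1}), and both $M$ and $E$ lie in $\mathscr{GI}$, we get $N\in\mathscr{GI}$. The hypothesis then yields $\mathrm{Ext}^1_\Lambda(N,M)=0$, so the sequence splits. Hence $M$ is a direct summand of $E$, and therefore injective.

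As an alternative finishing step, one could use Lemma \ref{lem81} directly. Since $\Lambda$ is virtually Gorenstein, $\mathscr{GP}^\perp={}^\perp\mathscr{GI}$. It would then suffice to show $M\in{}^\perp\mathscr{GI}$, i.e. $\mathrm{Ext}^1(M,G)=0$ for all $G\in\mathscr{GI}$. This is the wrong direction of Ext compared with the hypothesis, however, so the splitting argument above is cleaner and is the route I would take.

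The only point to verify carefully is the closure of $\mathscr{GI}$ under cokernels of monomorphisms. This is exactly the hereditary property in the sense defined in the introduction, applied to $0\to M\to E\to N\to 0$ with $M,E\in\mathscr{GI}$, and it is provided by Proposition \ref{prop1}. With that in hand, no genuine obstacle remains; the substantive content was already established in Lemma \ref{lem82}.
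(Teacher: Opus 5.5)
Your proof is correct and is essentially the paper's argument. You use $\mathscr{GI}^{\perp_\infty}\subseteq D(\Lambda^{\rm op})^{\perp_\infty}$ and Lemma~\ref{lem82} to get $M\in\mathscr{GI}$, then split $0\to M\to E(M)\to E(M)/M\to 0$ using $E(M)/M\in\mathscr{GI}$ and the hypothesis. Your explicit appeal to the hereditary property from Proposition~\ref{prop1} to get $E(M)/M\in\mathscr{GI}$ just spells out a step the paper leaves implicit.
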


\begin{proof} Since $M \in $ ${\mathscr{GI}}^{\perp_{\infty}} \subseteq$ ${D(\Lambda^{{\rm op}})}^{\perp_{\infty}} $,
  $M $   $\in \mathscr{GI}$ by Lemma \ref{lem82}.
So  $E(M)/M  $ $\in \mathscr{GI}$, where  $E(M)$ is the injective envelope of $M $.
 It follows that $M $ is a direct summand of $E(M)$ since  Ext$_{\Lambda}^{1}(E(M)/M, M) =  0$,  as desired. \end{proof}

Let $  \mathbb{N}$ be the set  of all non-negative integers and write inf $\emptyset := \infty$.  We now establish a new criterion for determining the (Gorenstein) injective dimensions of   modules over virtually
Gorenstein Artin algebras.

 \begin{thm}\label{thm91}   Let    $\Lambda$ be a virtually
Gorenstein Artin algebra.  For any   $M\in $  {\rm Mod}-$\Lambda$, the following  two equalities hold.
 \begin{itemize}
\item[$(1)$]  {\rm  id$_{\Lambda}$$(M)$ =  inf\{$n \in \mathbb{N}$\ $|$\   Ext$_{\Lambda}^{n+l}(G, M) = 0$ for every $G $   $\in $  $\mathscr{GI}$ and all $l\geq 1$\}.}
\item[$(2)$] {\rm  Gid$_{\Lambda}$$(M)$ =  inf\{$n \in \mathbb{N}$\ $|$\   Ext$_{\Lambda}^{n+l}(D(\Lambda^{{\rm op}}), M) =  0$ for all $l\geq 1$\}.}
\end{itemize}    \end{thm}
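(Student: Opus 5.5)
Both equalities will be proved by dimension shifting along an injective resolution of $M$, reducing each to the case $n=0$, which is exactly Lemma \ref{lem83} for (1) and Lemma \ref{lem82} for (2).

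For (1), write $d$ for the infimum on the right. If id$_{\Lambda}(M)=n<\infty$, then Ext$_{\Lambda}^{n+l}(G,M)=0$ for every module $G$ and all $l\geq 1$, so $d\leq n$. For the reverse inequality, suppose Ext$_{\Lambda}^{n+l}(G,M)=0$ for all $G\in\mathscr{GI}$ and $l\geq1$. Take an injective resolution $0\to M\to I_0\to\cdots\to I_{n-1}\to K_n\to 0$, where $K_n$ is the $n$-th cosyzygy (with $K_0=M$). Dimension shifting gives Ext$_{\Lambda}^{l}(G,K_n)\cong$ Ext$_{\Lambda}^{n+l}(G,M)$ for all $l\geq1$. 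This uses only that the $I_i$ are injective, so it holds for arbitrary $G$. Hence $K_n\in\mathscr{GI}^{\perp_\infty}$, and Lemma \ref{lem83} shows $K_n$ is injective. Therefore id$_{\Lambda}(M)\leq n$, which proves (1) including the case where both sides are $\infty$.

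For (2), first let Gid$_{\Lambda}(M)=n<\infty$. We need Ext$_{\Lambda}^{n+l}(D(\Lambda^{\rm op}),M)=0$, and the cleanest route is a resolution of $M$ by injectives ending in a Gorenstein injective. Choose $0\to M\to I_0\to\cdots\to I_{n-1}\to K_n\to0$ with the $I_i$ injective. By the standard result (Holm) that Gorenstein injective dimension is detected at the $n$-th cosyzygy of any injective resolution, $K_n\in\mathscr{GI}$. Alternatively one can argue directly with Proposition \ref{prop1}: $\mathscr{GI}$ is closed under cokernels of monomorphisms, and an $n$-step dimension-shift argument from a $\mathscr{GI}$-coresolution of length $n$ gives the same conclusion. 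Since $\mathscr{GI}\subseteq{\bf Inj}(\Lambda)^{\perp_\infty}$, we get Ext$_{\Lambda}^{n+l}(D(\Lambda^{\rm op}),M)\cong$ Ext$_{\Lambda}^{l}(D(\Lambda^{\rm op}),K_n)=0$. Conversely, if Ext$_{\Lambda}^{n+l}(D(\Lambda^{\rm op}),M)=0$ for all $l\geq1$, the same dimension shift puts $K_n$ in ${D(\Lambda^{\rm op})}^{\perp_\infty}$. Lemma \ref{lem82} then gives $K_n\in\mathscr{GI}$, so the coresolution $0\to M\to I_0\to\cdots\to I_{n-1}\to K_n\to0$ has all terms Gorenstein injective, and Gid$_{\Lambda}(M)\leq n$.

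The main obstacle is the first direction of (2): showing that finite Gorenstein injective dimension $n$ forces the $n$-th injective cosyzygy to be Gorenstein injective. I would cite Holm's theorem (\cite{hol}, Theorem 2.22, dual version), which holds over any ring. Failing that, I would derive it from the hereditary complete cotorsion pair of Proposition \ref{prop1} by the usual argument: compare a given $\mathscr{GI}$-coresolution with an injective one and use closure of $\mathscr{GI}$ under extensions and cokernels of monomorphisms.
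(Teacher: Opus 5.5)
Your proposal is correct and is essentially the paper's proof: you dimension-shift along an injective resolution to the $n$-th cosyzygy, then apply Lemma~\ref{lem83} for (1) and Lemma~\ref{lem82} for (2). For the easy inequality in (2), which the paper leaves implicit, Holm's theorem is not needed: dimension shifting directly along a length-$n$ $\mathscr{GI}$-coresolution of $M$ already gives ${\rm Ext}_{\Lambda}^{n+l}(D(\Lambda^{\rm op}),M)=0$, because $\mathscr{GI}\subseteq{\bf Inj}(\Lambda)^{\perp_\infty}$.
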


\begin{proof} (1). Put $t$ := inf\{$n \in \mathbb{N}$\ $|$\   Ext$_{\Lambda}^{n+l}(G, M) = 0$ for every $G $   $\in $  $\mathscr{GI}$ and all $l\geq 1$\}.
It is clear that $t$ $\leq  $    id$_{\Lambda}$$(M)$. Next   we prove that   id$_{\Lambda}$$(M)$ $\leq t$. We may assume that $t$ $<  \infty$.
Take an injective resolution $$ 0  \longrightarrow M\stackrel {g_{0}}
\longrightarrow
I_0\stackrel {g_{1}} \longrightarrow I_1\stackrel {g_{2}}
\longrightarrow \cdots $$
of   $M$, and set $L_{t}$ := im$(g_{t})$.
 Then $ {\rm Ext}_{\Lambda}^{l}(G, L_{t}) \cong {\rm Ext}_{\Lambda}^{t+l}{\rm (}G, M) = 0$ for every $G $   $\in $  $\mathscr{GI}$ and all $l\geq 1$.
 So  $L_{t}$ is injective by Lemma \ref{lem83}, and   id$_{\Lambda}$$(M) \leq t$. This proves (1).

   The proof of (2) is similar
(but using Lemma \ref{lem82}).
\end{proof}

We next establish the dual of Theorem \ref{thm91}.
 \begin{thm}\label{thm92}   Let    $\Lambda$ be a virtually
Gorenstein Artin algebra.  For any   $M\in $  {\rm Mod}-$\Lambda$, the following  two equalities hold.
 \begin{itemize}
\item[$(1)$]  {\rm  pd$_{\Lambda}$$(M)$ =  inf\{$n \in \mathbb{N}$\ $|$\   Ext$_{\Lambda}^{n+l}(M, Q) = 0$ for every $Q $   $\in $  $\mathscr{GP}$ and all $l\geq 1$\}.}
\item[$(2)$] {\rm  Gpd$_{\Lambda}$$(M)$ =  inf\{$n \in \mathbb{N}$\ $|$\   Ext$_{\Lambda}^{n+l}(M, \Lambda) =  0$ for all $l\geq 1$\}.}
\end{itemize}    \end{thm}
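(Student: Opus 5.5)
The plan is to dualize the proof of Theorem \ref{thm91}. For this I first need the duals of Lemmas \ref{lem81}, \ref{lem82} and \ref{lem83}:
\begin{itemize}
\item[(a)] if $M\in{}^{\perp_\infty}\Lambda \cap {}^\perp\mathscr{GI}$, then $M$ is projective;
\item[(b)] if $M\in{}^{\perp_\infty}\Lambda$, then $M\in\mathscr{GP}$;
\item[(c)] if $M\in{}^{\perp_\infty}\mathscr{GP}$, then $M$ is projective.
\end{itemize}
Once these are available, the argument is formal.

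For (1), let $t$ be the infimum. Clearly $t\le {\rm pd}_\Lambda(M)$, so assume $t<\infty$. Take a projective resolution of $M$ and let $K_t$ be the $t$-th syzygy. Dimension shifting gives ${\rm Ext}^l_\Lambda(K_t,Q)\cong{\rm Ext}^{t+l}_\Lambda(M,Q)=0$ for all $Q\in\mathscr{GP}$ and $l\ge1$. By (c), $K_t$ is projective, so ${\rm pd}_\Lambda(M)\le t$.

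For (2), run the same argument with (b). Since $\Lambda\in\mathscr{GP}$, the inequality ``$\le$'' in the other direction is standard: a module of Gorenstein projective dimension $n$ has $n$-th syzygy Gorenstein projective, and Gorenstein projectives lie in ${}^{\perp_\infty}\Lambda$, hence ${}^{\perp_\infty}{\bf Proj}(\Lambda)$.

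To prove (c) from (b): if $M\in{}^{\perp_\infty}\mathscr{GP}$, then $M\in{}^{\perp_\infty}\Lambda$, so $M\in\mathscr{GP}$ by (b). Take $0\to M\to P\to M'\to 0$ with $P$ projective and $M'\in\mathscr{GP}$ (from a complete resolution). Then ${\rm Ext}^1(M',M)=0$, so the sequence splits and $M$ is projective.

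To prove (b) from (a): virtual Gorensteinness gives the complete hereditary cotorsion pair $(\mathscr{GP},\mathscr{GP}^\perp)=(\mathscr{GP},{}^\perp\mathscr{GI})$ (Proposition \ref{prop301} shows $\mathscr{GP}={}^\perp\mathscr{X}$, and completeness follows from \cite{bel}). Take a special $\mathscr{GP}$-preenvelope $0\to M\to Y\to G\to 0$ with $Y\in\mathscr{GP}^\perp$ and $G\in\mathscr{GP}$. Since $G\in{}^{\perp_\infty}\Lambda$, we get $Y\in{}^{\perp_\infty}\Lambda\cap{}^\perp\mathscr{GI}$, so $Y$ is projective by (a). Then $M$ is a kernel of an epimorphism from a projective onto a Gorenstein projective. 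By heredity of $(\mathscr{GP},\mathscr{GP}^\perp)$, $\mathscr{GP}$ is closed under kernels of epimorphisms, so $M\in\mathscr{GP}$.

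The main obstacle is (a), the dual of Lemma \ref{lem81}. There one bounded ${\rm Ext}^{*}(-,M)$ using the finite set $\mathscr{X}$ and simple modules. Here I need a uniform $t$ with ${\rm Ext}^{t}_\Lambda(M,V)=0$ for all $V\in$ mod-$\Lambda$ (which suffices over an Artin algebra, via the Ext--Tor/duality for finitely presented $V$ and the fact that every module is a direct limit of finitely generated ones). Alternatively, I would bound ${\rm Ext}^{t}_\Lambda(M,S)$ for all simples $S$. The plan is to use the second sequence of Lemma \ref{lem22}, $0\to S\to G\to C\to0$ with $G\in\mathscr{GI}$ and $C\in{\rm add(filt}(\mathscr{X}))$:
\begin{itemize}
\item ${\rm Ext}^{\ge1}(M,G)=0$ because $M\in{}^\perp\mathscr{GI}={}^{\perp_\infty}\mathscr{GI}$;
\item for each $X\in\mathscr{X}$, ${\rm Ext}^{j}(M,X)=0$ for $j\gg0$. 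For $X={\rm im}(h_i)$ this follows by dimension shifting along $(\clubsuit)$, since $M\in{}^{\perp_\infty}\Lambda$ and ${\rm Ext}^{\ge1}(M,E_i)=0$ because injectives lie in $\mathscr{GI}$. For $X={\rm im}(f_{ji})$, the projectives $F_{ji}$ are killed by ${\rm Ext}^{\ge1}(M,-)$ (they lie in add$\Lambda$, as finitely generated), and $E_j\in\mathscr{GI}$, so shifting along $(\spadesuit_j)$ gives ${\rm Ext}^{\ge i+1}(M,{\rm im}f_{ji})=0$.
\end{itemize}
Filtrations and finiteness of $\mathscr{S}$ then give a uniform bound, so ${\rm pd}_\Lambda(M)<\infty$. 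Combined with $M\in{}^{\perp_\infty}\Lambda$, hence $M\in{}^{\perp_\infty}{\bf Proj}(\Lambda)$ (for possibly infinitely generated $M$, I would need ${\rm Ext}^{\ge 1}(M,-)$ to vanish on arbitrary projectives; this is where care is needed, using that $M\in{}^\perp\mathscr{GI}$ and projectives of finite injective dimension-type arguments, or restricting to the summand trick), the last syzygy splits and $M$ is projective.
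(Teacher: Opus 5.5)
Your route differs from the paper's. The paper never proves projective-side analogues of Lemmas \ref{lem81}--\ref{lem83}. Instead it applies the left-module versions of Theorem \ref{thm91}(1) and Lemma \ref{lem82} to $D(M)$ and $D(K_m)$. This uses ${\rm Ext}^i_{\Lambda^{\rm op}}(G,D(N))\cong D{\rm Tor}_i^\Lambda(N,G)$ and the fact that $D$ sends Gorenstein injective left modules to Gorenstein projective right modules. It then returns through the pure embedding $N\subseteq DD(N)$: flat dimension and Gorenstein projectivity descend to pure submodules, and flat modules are projective. That keeps all the real work on the injective side, where infinitely generated modules are harmless.

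Your direct dualization is viable and avoids the purity machinery. Step (b) is correct. So is the bounding part of (a): the second sequence of Lemma \ref{lem22}, together with the Ext--Tor/direct-limit argument, gives ${\rm pd}_\Lambda(M)<\infty$. But as written there is one genuine gap and one slip.

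\textbf{The gap} is the final step of (a), which you flag but do not close. With ${\rm pd}_\Lambda(M)=n<\infty$, splitting $0\to P_n\to P_{n-1}\to\Omega^{n-1}M\to 0$ needs ${\rm Ext}^n_\Lambda(M,P_n)=0$ for a possibly infinitely generated projective $P_n$. You only know ${\rm Ext}^{\geq1}_\Lambda(M,\Lambda)=0$, and ${\rm Ext}^n_\Lambda(M,-)$ does not commute with direct sums when $M$ is not finitely generated. This is exactly where the projective side is not formally dual to Lemma \ref{lem81}, where ${\rm Ext}^i_\Lambda(\bigoplus_j E_j,M)\cong\prod_j{\rm Ext}^i_\Lambda(E_j,M)$ is automatic. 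Appealing to ``finite injective dimension-type arguments'' or a ``summand trick'' is not yet an argument.

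One way to close it:
\begin{itemize}
\item ${\rm Ext}^i_\Lambda(M,\Lambda)\cong{\rm Ext}^i_\Lambda(M,DD(\Lambda))\cong D{\rm Tor}^\Lambda_i(M,D(\Lambda))$, with $D(\Lambda)$ the left module, so ${\rm Tor}^\Lambda_{\geq1}(M,D(\Lambda))=0$.
\item Every injective left module $E$ is a direct sum of summands of $D(\Lambda)$, so ${\rm Tor}^\Lambda_{\geq1}(M,E)=0$.
\item Since flat modules are projective, ${\rm fd}(M)={\rm pd}_\Lambda(M)=n$. Hence ${\rm Tor}^\Lambda_{n+1}(M,-)=0$ and ${\rm Tor}^\Lambda_n(M,-)$ preserves monomorphisms.
\item If $n\geq1$, embedding any left module into an injective one forces ${\rm Tor}^\Lambda_n(M,-)=0$, a contradiction.
\end{itemize}
Alternatively, $\Lambda$ is $\Sigma$-pure-injective, so $\Lambda^{(I)}$ is a summand of $\Lambda^I$ and ${}^{\perp_\infty}\Lambda={}^{\perp_\infty}{\bf Proj}(\Lambda)$.

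\textbf{The slip} is in (c). For $0\to M\to P\to M'\to 0$ to split you need ${\rm Ext}^1_\Lambda(M',M)=0$, but $M\in{}^{\perp_\infty}\mathscr{GP}$ only gives ${\rm Ext}^1_\Lambda(M,M')=0$. Use instead $0\to K\to P\to M\to 0$ with $P$ projective and $K\in\mathscr{GP}$, which exists because $M\in\mathscr{GP}$ by (b). It splits since ${\rm Ext}^1_\Lambda(M,K)=0$.
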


\begin{proof}   (1).   Write  $t$ := inf\{$n \in \mathbb{N}$\ $|$\   Ext$_{\Lambda}^{n+l}(M, Q) = 0$ for every $Q $   $\in $  $\mathscr{GP}$ and all $l\geq 1$\}.
It is   clear that $t \leq$   pd$_{\Lambda}$$(M)$. Next we prove that    pd$_{\Lambda}$$(M)$ $\leq t $. We may
assume $t < \infty$. Let  $G $ be   an arbitrary Gorenstein injective right $\Lambda^{{\rm op}}$-module.
Then $D(G)  \in \mathscr {GP}$  by \cite[Lemma 8.6(1)]{bel}. Thus
$${\rm Ext}_{\Lambda^{{\rm op}}}^{t+l}(G, D(M)) \cong  D{\rm Tor}_{t+l}^{\Lambda}(M, G)  \cong      {\rm Ext}_{\Lambda}^{t+l}(M, D(G)) =  0$$ for all $l \geq 1 $
by  \cite[ Lemma 2.16(b)]{gbe}, and hence     id$_{\Lambda^{{\rm op}}}$$(D(M)) \leq t$   by the left-hand counterpart of Theorem \ref{thm91}(1).
So  the  flat  dimension of $DD(M)$ is   less than or equal to $t$ by \cite[Proposition 3.3(3)]{li}. In addition, $M  $ is a pure submodule of $DD(M)$ by \cite[Proposition 5.3.9]{eno5}. Hence the  flat  dimension of $M$ is also   less than or equal to $t$ by \cite[Lemma 3.6(2)]{li}.
As is well known, flat modules over an Artin algebra are   projective,
we have that  pd$_{\Lambda}$$(M)$ $\leq t $. This implies that  pd$_{\Lambda}$$(M)$ $= t $.

 (2).   Put  $m$ :=  inf\{$n \in \mathbb{N}$\ $|$\   Ext$_{\Lambda}^{n+l}(M, \Lambda) =  0$ for all $l\geq 1$\}. Obviously,   $m \leq$  Gpd$_{\Lambda}$$(M)$.
 Now we show that   Gpd$_{\Lambda}$$(M)$ $\leq m $. We may assume $m < \infty$. Take a  projective resolution $$\cdots\stackrel {f_{m+2}} \longrightarrow{P_{m+1}}\stackrel {f_{m+1}} \longrightarrow{P_{m}}\stackrel {f_{m}} \longrightarrow \cdots \stackrel {f_{2}} \longrightarrow
{P_{1}}\stackrel {f_{1}} \longrightarrow {P_{0}}\stackrel {f_{0}}
\longrightarrow M  \longrightarrow  0  $$
 of  $M$, and write $K_{m}$ := im$(f_{m})$. We have to prove that  $K_{m}$ is Gorenstein projective.
  Note that $\Lambda \cong $ $DD(\Lambda)$. So, by     \cite[ Lemma 2.16(b)]{gbe}
we have $${\rm Ext}_{\Lambda^{{\rm op}}}^{l}(D(\Lambda), D(K_{m})) \cong  D{\rm Tor}_{l}^{\Lambda}(K_{m}, D(\Lambda))  \cong
  {\rm Ext}_{\Lambda}^{l}(K_{m}, DD(\Lambda)) \cong    {\rm Ext}_{\Lambda}^{l}(K_{m}, \Lambda)  \cong    {\rm Ext}_{\Lambda}^{m+l}(M, \Lambda) = 0$$ for all $l \geq 1 $.
  In addition,   $\Lambda^{{\rm op}}$ is also virtually Gorenstein (see \cite[Theorem 8.7]{bel}). Thus we infer from   the left-hand counterpart of
  Lemma \ref{lem82}  that  $D(K_{m})$   is Gorenstein injective. Hence  $DD(K_{m})$ is Gorenstein projective by \cite[Lemma 8.6(1)]{bel}.
 But  $K_{m}$ is a pure submodule of $DD(K_{m})$ by \cite[Proposition 5.3.9]{eno5}. Therefore,
$K_{m}$ is Gorenstein projective by \cite[Proposition 3.8(i)]{bel},  i.e.,  Gpd$_{\Lambda}$$(M)$ $\leq m $. This finishes the proof.\end{proof}

 We are in a position to show that every virtually Gorenstein  Artin algebra is weakly Gorenstein.

 \begin{thm}\label{thm101} Let   $\Lambda$  be  a virtually Gorenstein  Artin algebra. Then  $^{\perp_{\infty}}{\Lambda} $ $=$  $\mathscr{GP}$ and ${D(\Lambda^{{\rm op}})}^{\perp_{\infty}} $ $=$  $\mathscr{GI}$.
 In particular,  $\Lambda$ is weakly Gorenstein.
 \end{thm}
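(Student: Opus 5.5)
The plan is to read both equalities off Theorem \ref{thm92}(2) and Theorem \ref{thm91}(2), applied with $n=0$. For the first equality, the inclusion $\mathscr{GP} \subseteq {}^{\perp_{\infty}}\Lambda$ holds by definition: applying Hom$_{\Lambda}(-,\Lambda)$ to a complete projective resolution keeps it exact, so every Gorenstein projective module has vanishing Ext$^{i}(-,\Lambda)$ for $i \geq 1$. For the converse, take $M \in {}^{\perp_{\infty}}\Lambda$, possibly infinitely generated. Then Ext$_{\Lambda}^{0+l}(M,\Lambda)=0$ for all $l\geq 1$, so the infimum in Theorem \ref{thm92}(2) is $0$ and hence Gpd$_{\Lambda}(M)=0$. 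A module of Gorenstein projective dimension $0$ is Gorenstein projective: by Definition \ref{defn100}, there is an exact sequence $0\rightarrow G_0\rightarrow M\rightarrow 0$ with $G_0$ Gorenstein projective, so $M\cong G_0$. Therefore ${}^{\perp_{\infty}}\Lambda=\mathscr{GP}$.

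The second equality is dual. Every Gorenstein injective module lies in ${\bf Inj}(\Lambda)^{\perp_{\infty}}$, and $D(\Lambda^{\rm op})$ is injective, so $\mathscr{GI}\subseteq D(\Lambda^{\rm op})^{\perp_{\infty}}$. The reverse inclusion is exactly Lemma \ref{lem82}. Alternatively, it follows from Theorem \ref{thm91}(2) with infimum $0$, by the same argument as above.

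For the final claim, restrict the first equality to finitely generated modules: ${}^{\perp_{\infty}}\Lambda\cap$ mod-$\Lambda \subseteq \mathscr{GP}$, so $\Lambda$ is right weakly Gorenstein. By \cite[Theorem 8.7]{bel}, $\Lambda^{\rm op}$ is also a virtually Gorenstein Artin algebra, so the same argument applied to $\Lambda^{\rm op}$ shows it is right weakly Gorenstein. Hence $\Lambda$ is weakly Gorenstein. There is no real obstacle left, since all the substantive work sits in Lemma \ref{lem82} and Theorem \ref{thm92}. The only point to check is that Theorem \ref{thm92}(2) is stated for arbitrary modules in Mod-$\Lambda$, which it is, so the conclusion holds without any finiteness assumption on $M$.
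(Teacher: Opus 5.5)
Your proposal is correct and follows the paper's proof. Both equalities are read off Theorem~\ref{thm92}(2) and Theorem~\ref{thm91}(2) with the infimum equal to $0$; citing Lemma~\ref{lem82} directly for the $\mathscr{GI}$ equality is fine, since Theorem~\ref{thm91}(2) rests on it. Weak Gorensteinness of $\Lambda^{\rm op}$ then comes from \cite[Theorem 8.7]{bel}, and your remarks on the easy inclusions and on $\mathrm{Gpd}_{\Lambda}(M)=0$ forcing $M\in\mathscr{GP}$ simply make explicit what the paper leaves implicit.
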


 \begin{proof} Since $\Lambda$ is a virtually Gorenstein  Artin algebra, we obtain from Theorems \ref{thm92}(2) and \ref{thm91}(2)   that $^{\perp_{\infty}}{\Lambda} $ $=$  $\mathscr{GP}$
 and ${D(\Lambda^{{\rm op}})}^{\perp_{\infty}} $ $=$  $\mathscr{GI}$, respectively.
 So $\Lambda$ is right weakly Gorenstein. But $\Lambda^{{\rm op}}$ is also   virtually Gorenstein (cf. \cite[Theorem 8.7]{bel}).
 Thus $\Lambda^{{\rm op}}$ is also right weakly Gorenstein. Therefore,  $\Lambda$ is weakly Gorenstein.\end{proof}

As shown in the next example, weakly Gorenstein Artin algebras need not be virtually Gorenstein.
\begin{exm} \label{exm1} According to  {\rm \cite[Theorem 1.4]{kim}}, there is a weakly Gorenstein Artin algebra $\Lambda$ such that
$\mathscr{GP}$ $\cap$  {\rm  mod-}$\Lambda$ $\subseteq {\bf Proj}(\Lambda)$ but $\mathscr{GP}$ $\neq$ ${\bf Proj}(\Lambda)$.
Then $\Lambda$ is not virtually Gorenstein. \end{exm}
 \begin{proof} Assume to the contrary that $\Lambda$ is   virtually Gorenstein. Then any Gorenstein projective module $M$ is a direct limit
 of modules from $\mathscr{GP}$ $\cap$  {\rm  mod-}$\Lambda$ $\subseteq {\bf Proj}(\Lambda)$ by \cite[Theorem 5]{bek}. This implies that $M$ is projective, contradicting   the assumption  that  $\mathscr{GP}$ $\neq$ ${\bf Proj}(\Lambda)$.
 Therefore, $\Lambda$ is not virtually Gorenstein.\end{proof}

The corollary below  is tightly linked to     Question \ref{que1}.
 \begin{cor}\label{cor101} Let   $\Lambda$ be an  Artin algebra with ${\bf Proj}(\Lambda)$ $=$  $\mathscr{GP}$. Then   $\Lambda$ is weakly Gorenstein.
 \end{cor}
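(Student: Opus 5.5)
The plan is to show that the hypothesis ${\bf Proj}(\Lambda)=\mathscr{GP}$ forces $\Lambda$ to be virtually Gorenstein, and then apply Theorem \ref{thm101}. To check Definition \ref{defn21}, we must show $\mathscr{GP}^\perp={}^\perp\mathscr{GI}$.

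The left side is easy. Since $\mathscr{GP}={\bf Proj}(\Lambda)$, we have $\mathscr{GP}^\perp={\bf Proj}(\Lambda)^\perp=$ Mod-$\Lambda$. So it suffices to prove ${}^\perp\mathscr{GI}=$ Mod-$\Lambda$, i.e. $\mathscr{GI}={\bf Inj}(\Lambda)$.

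Step 1 is to pass to the opposite side. Let $G$ be a Gorenstein injective right $\Lambda^{\rm op}$-module. By \cite[Lemma 8.6(1)]{bel}, $D(G)\in\mathscr{GP}={\bf Proj}(\Lambda)$. Hence $DD(G)$ is injective over $\Lambda^{\rm op}$.

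Step 2 is to recover $G$ from $DD(G)$. Since $G$ is a pure submodule of $DD(G)$ by \cite[Proposition 5.3.9]{eno5}, we look at the sequence $0\to G\to DD(G)\to DD(G)/G\to 0$. I would then show this sequence splits. Over an Artin algebra (a noetherian ring), a pure submodule of an injective module is injective: for finitely generated ideals, pure-injectivity-type arguments via Baer's criterion let one extend maps, since $\Hom(-,G)$ applied to maps between finitely presented modules behaves as for $DD(G)$. So $G$ is injective, and therefore $\mathscr{GI}(\Lambda^{\rm op})={\bf Inj}(\Lambda^{\rm op})$.

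Step 3 is to transfer this back to $\Lambda$. We need $\mathscr{GI}={\bf Inj}(\Lambda)$ on the right, not on $\Lambda^{\rm op}$. The same argument, using the left-handed version of \cite[Lemma 8.6]{bel}, requires $\mathscr{GP}(\Lambda^{\rm op})={\bf Proj}(\Lambda^{\rm op})$. I expect this symmetry to be the main obstacle. I would first try to get it by running the argument in the other direction. Let $Q$ be a Gorenstein projective right $\Lambda^{\rm op}$-module. Then $D(Q)$ should be Gorenstein injective over $\Lambda$, using the dual statement of \cite[Lemma 8.6]{bel} or the fact that $D$ sends complete projective resolutions to complete injective resolutions, the latter being checked via Tor–Ext duality \cite[Lemma 2.16(b)]{gbe}.

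This gives a cleaner route that avoids the obstacle. Take a Gorenstein injective right $\Lambda$-module $G$. Then $D(G)$ is a Gorenstein projective $\Lambda^{\rm op}$-module. Then $DD(G)$ is Gorenstein injective, and also a duality image of a $\Lambda$-module; $DD(G)$ lies in $\mathscr{GP}$ on the correct side via \cite[Lemma 8.6(1)]{bel} applied on the right. Combining this with Step 2's purity argument, I would aim to conclude directly that $\mathscr{GI}={\bf Inj}(\Lambda)$. If the side-bookkeeping fails, the fallback is to invoke \cite[Theorem 8.7]{bel}-style arguments: the condition $\mathscr{GP}={\bf Proj}$ says the cotorsion pair $(\mathscr{GP},\mathscr{GP}^\perp)$ is trivially complete and generated by a set ($\{\Lambda\}$), which is one of Beligiannis' characterizations of virtual Gorensteinness.

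Once virtual Gorensteinness is established, Theorem \ref{thm101} gives that $\Lambda$ is weakly Gorenstein.
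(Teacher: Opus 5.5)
Your frame (show $\Lambda$ is virtually Gorenstein, then apply Theorem \ref{thm101}) is the paper's, and Steps 1--2 are essentially sound. For $G\in\mathscr{GI}$ over $\Lambda^{\rm op}$ you get $D(G)\in{\bf Proj}(\Lambda)$, so $DD(G)$ is injective. Over a noetherian ring a pure submodule of an injective module is injective: use Baer's criterion with finitely generated ideals, plus the fact that finite linear systems over $G$ solvable in $DD(G)$ are solvable in $G$. But this proves $\mathscr{GI}={\bf Inj}$ for $\Lambda^{\rm op}$, i.e. for left modules. Definition \ref{defn21} needs $\mathscr{GI}={\bf Inj}(\Lambda)$ for right modules, and Step 3 does not bridge the gap.

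Each application of $D$ switches sides and interchanges Gorenstein projective with Gorenstein injective. So, starting from a Gorenstein injective right $\Lambda$-module $G$, the modules $D(G)$, $DD(G)$, $DDD(G),\dots$ are alternately Gorenstein projective left modules and Gorenstein injective right modules. None of them is ever a Gorenstein projective right module, which is the only class your hypothesis controls. In particular, the claim that $DD(G)$ ``lies in $\mathscr{GP}$ on the correct side'' is false. $DD(G)$ is Gorenstein injective, and \cite[Lemma 8.6(1)]{bel} applied to it only tells you that $DDD(G)$ is Gorenstein projective over $\Lambda^{\rm op}$. Passing from ${\bf Proj}(\Lambda)=\mathscr{GP}$ to ${\bf Inj}(\Lambda)=\mathscr{GI}$ on the same side needs a genuinely new link between $\mathscr{GP}$ and $\mathscr{GI}$. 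The equality is true here, but it follows from virtual Gorensteinness: then ${}^\perp\mathscr{GI}=\mathscr{GP}^\perp=$ Mod-$\Lambda$, and Proposition \ref{prop1} gives $\mathscr{GI}={\bf Inj}(\Lambda)$. So you would still need an independent argument for it, and $D$ alone cannot supply one.

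What is missing is a criterion for virtual Gorensteinness that becomes trivial under ${\bf Proj}(\Lambda)=\mathscr{GP}$. Your fallback points in this direction but does not identify one. \cite[Theorem 8.7]{bel} is the left--right symmetry of virtual Gorensteinness, and you do not say which equivalent condition ``complete and generated by a set'' is supposed to be. The paper uses \cite[Theorem 8.2(viii)]{bel}: $\Lambda$ is virtually Gorenstein if $\mathscr{GP}^\perp\cap$ mod-$\Lambda$ is contravariantly finite in mod-$\Lambda$. Since $\mathscr{GP}^\perp={\bf Proj}(\Lambda)^\perp=$ Mod-$\Lambda$, this subcategory is all of mod-$\Lambda$, which is trivially contravariantly finite. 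Theorem \ref{thm101} then finishes, and with that citation your Steps 1--3 are unnecessary.
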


  \begin{proof} Note that  ${\bf Proj}(\Lambda)$ $=$  $\mathscr{GP}$. So $\mathscr{GP}^\perp$  = Mod-$\Lambda$, and hence $\mathscr{GP}^\perp $ $\cap$ mod-$\Lambda$  = mod-$\Lambda$
  is {\it contravariantly  finite} (in the sense of \cite{ar1}) in mod-$\Lambda$.
    We then infer from \cite[Theorem 8.2(viii)]{bel} that
  $\Lambda$ is  virtually Gorenstein.    Therefore,  $\Lambda$ is weakly Gorenstein by Theorem \ref{thm101}.\end{proof}

\begin{rem}\label{rem930}  {\rm We note that Corollary \ref{cor101}  does not settle   Question \ref{que1} in   full. That is to say,  Question \ref{que1} remains open.
}\end{rem}

We define an  analogue  of the Strong Nakayama Conjecture   for arbitrary modules as follows: Let    $\Lambda$ be an
  Artin algebra;  for any   $M\in $  {\rm Mod}-$\Lambda$, if    {\rm Ext}$_{\Lambda}^{i}(M, \Lambda) =  0$ for all  $i \geq 0$, then $M$ is zero.

Perhaps surprisingly,  we see from the following result that the analogue the Strong Nakayama Conjecture  for arbitrary modules is valid  over virtually Gorenstein Artin algebras.
\begin{cor}\label{cor10}   Let    $\Lambda$ be a virtually
Gorenstein Artin algebra.  For any   $M\in $  {\rm Mod}-$\Lambda$, if   {\rm Ext}$_{\Lambda}^{i}(M, \Lambda) =  0$  for all  $i \geq 0$, then $M$ is zero.    \end{cor}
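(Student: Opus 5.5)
By Theorem \ref{thm101}, $M\in{}^{\perp_{\infty}}\Lambda=\mathscr{GP}$, since by hypothesis ${\rm Ext}_{\Lambda}^{i}(M,\Lambda)=0$ for all $i\geq 1$. The plan is to exploit the remaining condition ${\rm Hom}_{\Lambda}(M,\Lambda)=0$ to force $M=0$.

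Since $M$ is Gorenstein projective, Definition \ref{defn1} gives a complete projective resolution $\cdots\rightarrow P_0\rightarrow P_{-1}\rightarrow\cdots$ with $M={\rm im}(P_0\rightarrow P_{-1})$. In particular there is a monomorphism $\iota\colon M\rightarrow P_{-1}$ with $P_{-1}$ projective. Over an Artin algebra, every projective module is a direct summand of a free module $\Lambda^{(\lambda)}$, so we obtain a monomorphism $M\rightarrow\Lambda^{(\lambda)}$.

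The key step, and the only point needing care, is passing from ${\rm Hom}_{\Lambda}(M,\Lambda)=0$ to ${\rm Hom}_{\Lambda}(M,\Lambda^{(\lambda)})=0$ when $M$ need not be finitely generated. To handle this, embed $\Lambda^{(\lambda)}\subseteq\Lambda^{\lambda}$. Then ${\rm Hom}_{\Lambda}(M,\Lambda^{\lambda})\cong{\rm Hom}_{\Lambda}(M,\Lambda)^{\lambda}=0$, since Hom converts products in the second variable into products regardless of finite generation. Hence every map $M\rightarrow\Lambda^{(\lambda)}$, composed with the inclusion into $\Lambda^{\lambda}$, is zero, and so the map itself is zero.

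Therefore the monomorphism $M\rightarrow\Lambda^{(\lambda)}$ is zero, which forces $M=0$.
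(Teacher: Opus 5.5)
Your proof is correct and follows the paper's route: Theorem \ref{thm101} gives $M\in\mathscr{GP}$, and then ${\rm Hom}_{\Lambda}(M,\Lambda)=0$ forces $M=0$. The paper states this last step without justification. You supply it: the embedding $M\hookrightarrow P_{-1}\hookrightarrow\Lambda^{(\lambda)}\hookrightarrow\Lambda^{\lambda}$ must vanish because ${\rm Hom}_{\Lambda}(M,\Lambda^{\lambda})\cong{\rm Hom}_{\Lambda}(M,\Lambda)^{\lambda}=0$, which is the right way to handle infinitely generated $M$.
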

\begin{proof}  Since $\Lambda$ is virtually
Gorenstein  and $M\in $  $^{\perp_{\infty}}{\Lambda} $, we obtain from Theorem \ref{thm101} that  $M\in $ $\mathscr{GP}$. But  ${\rm Hom_{\Lambda}}(M, \Lambda) \cong  {\rm Ext} _{\Lambda}^{0}(M, \Lambda)=0$.
Therefore,  $M$ is zero.\end{proof}

\begin{rem}\label{rem29}  {\rm
From the previous corollary and the   implications  ${\rm (SNC)} \Rightarrow {\rm(GNC)} \Rightarrow {\rm(AGC)} \Rightarrow {\rm(NC)}$,  we get    the validities
of (GNC), (AGC) and (NC)   for    virtually
Gorenstein Artin algebras, refining a main result in \cite[Theorem 1.2]{ch1}.}\end{rem}

We define an  analogue  of the Auslander-Reiten Conjecture for arbitrary modules as follows: Let    $\Lambda$ be an
  Artin algebra; for any   $M\in $  {\rm Mod}-$\Lambda$, if  {\rm Ext}$_{\Lambda}^{i}(M, M\oplus\Lambda) =  0$    for all  $i \geq 1$, then $M$ is projective.

We see from the next result that the   analogue the Auslander-Reiten Conjecture for arbitrary modules is true    over an Artin algebra  $\Lambda$  with ${\bf Proj}(\Lambda)$ $=$  $\mathscr{GP}$.
 \begin{cor}\label{cor201} Let   $\Lambda$ be an  Artin algebra with ${\bf Proj}(\Lambda)$ $=$  $\mathscr{GP}$. For any $M \in$  {\rm Mod}-$\Lambda$, we have
       $${\rm pd}_{\Lambda}(M) =  {\rm inf}\{n \in \mathbb{N}\ | \   {\rm Ext}_{\Lambda}^{n+l}(M, \Lambda) =  0 \ {\rm for \ all}\ l\geq 1\}.$$
 \end{cor}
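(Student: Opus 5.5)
The plan is to reduce to the virtually Gorenstein case and then combine the two parts of Theorem \ref{thm92}.

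\emph{Step 1: $\Lambda$ is virtually Gorenstein.} As in the proof of Corollary \ref{cor101}, the hypothesis ${\bf Proj}(\Lambda)=\mathscr{GP}$ gives $\mathscr{GP}^\perp=$ Mod-$\Lambda$. Hence $\mathscr{GP}^\perp\cap$ mod-$\Lambda=$ mod-$\Lambda$ is contravariantly finite in mod-$\Lambda$, and \cite[Theorem 8.2(viii)]{bel} shows that $\Lambda$ is virtually Gorenstein. So Theorems \ref{thm92} and \ref{thm101} are available.

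\emph{Step 2: compare the two characterizations.} Put $m:=\inf\{n\in\mathbb{N}\mid {\rm Ext}_{\Lambda}^{n+l}(M,\Lambda)=0$ for all $l\geq 1\}$. We always have $m\leq {\rm pd}_{\Lambda}(M)$, so it is enough to show ${\rm pd}_{\Lambda}(M)\leq m$, and we may assume $m<\infty$.

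There are two equivalent routes, and I would take the first.

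\emph{First route.} By Theorem \ref{thm92}(1), ${\rm pd}_{\Lambda}(M)$ is the least $n$ with ${\rm Ext}_{\Lambda}^{n+l}(M,Q)=0$ for all $Q\in\mathscr{GP}$ and all $l\geq1$. By hypothesis every such $Q$ is projective, hence a direct summand of a free module $\Lambda^{(I)}$. Since $\Lambda$ is an Artin algebra, $\Lambda^{(I)}$ is a direct summand of $\Lambda^{I}$: the module $\Lambda$ is $\Sigma$-pure-injective, and products of projectives are projective over Artin algebras. Also ${\rm Ext}^{k}_{\Lambda}(M,\Lambda^{I})\cong{\rm Ext}^{k}_{\Lambda}(M,\Lambda)^{I}$. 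Therefore ${\rm Ext}^{m+l}_{\Lambda}(M,Q)=0$ for all $Q\in\mathscr{GP}$, and Theorem \ref{thm92}(1) gives ${\rm pd}_{\Lambda}(M)\leq m$.

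\emph{Second route.} Alternatively, Theorem \ref{thm92}(2) gives ${\rm Gpd}_{\Lambda}(M)=m$. Looking at its proof, the $m$-th syzygy $K_m$ of $M$ is Gorenstein projective, hence projective by hypothesis. So ${\rm pd}_{\Lambda}(M)\leq m$.

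\emph{Main obstacle.} The only delicate point is the passage from vanishing of ${\rm Ext}(M,\Lambda)$ to vanishing of ${\rm Ext}(M,Q)$ for arbitrary, infinitely generated projectives $Q$. Ext does not commute with direct sums in the second variable for non-finitely generated $M$. For this reason the second route is the safer one, since it avoids the issue entirely. In the first route, one handles it by embedding $\Lambda^{(I)}$ as a summand of the product $\Lambda^{I}$, which is where the Artin hypothesis is used.
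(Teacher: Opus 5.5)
Your proposal is correct. Your second route is precisely the paper's proof. Once $\Lambda$ is virtually Gorenstein, Theorem~\ref{thm92}(2) identifies the infimum with ${\rm Gpd}_{\Lambda}(M)$. Then ${\rm Gpd}_{\Lambda}(M)={\rm pd}_{\Lambda}(M)$, because every module in a Gorenstein projective resolution is projective when $\mathscr{GP}={\bf Proj}(\Lambda)$. So you do not need to look inside the proof of Theorem~\ref{thm92}(2) to extract the syzygy $K_m$.

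Your preferred first route is genuinely different. It invokes Theorem~\ref{thm92}(1) instead, and supplies an extra step: over an Artin algebra, ${\rm Ext}^{k}_{\Lambda}(M,\Lambda)=0$ forces ${\rm Ext}^{k}_{\Lambda}(M,P)=0$ for every projective $P$. Your justification of this step is sound. $\Lambda_\Lambda$ is endofinite, hence $\Sigma$-pure-injective, so the pure embedding $\Lambda^{(I)}\to\Lambda^{I}$ splits. Also ${\rm Ext}$ commutes with products in the second variable. The projectivity of products is not needed once you use $\Sigma$-pure-injectivity. So the issue you flag as the ``main obstacle'' is fully resolved, and the first route is no less safe than the second.

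What the first route buys is a statement valid for every Artin algebra and every $M$, with no virtual Gorensteinness required: the infimum taken against $\Lambda$ equals the infimum taken against all projectives. Combined with $\mathscr{GP}={\bf Proj}(\Lambda)$, this is exactly the quantity in Theorem~\ref{thm92}(1). The paper's route is shorter, because Theorem~\ref{thm92}(2) already converts vanishing of ${\rm Ext}(-,\Lambda)$ into Gorenstein projectivity of the syzygy, via $D$ and purity. No separate argument about infinitely generated projectives is then needed.
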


  \begin{proof} Since  ${\bf Proj}(\Lambda)$ $=$  $\mathscr{GP}$,
  $\Lambda$ is  virtually Gorenstein.   Now  one can apply Theorem \ref{thm92} (2) to get that
inf\{$n \in \mathbb{N}$\ $|$\   Ext$_{\Lambda}^{n+l}(M, \Lambda) =  0$ for all $l\geq 1$\}  =  Gpd$_{\Lambda}$$(M)$ =   pd$_{\Lambda}$$(M)$.
  \end{proof}

As a special case of the Auslander-Reiten Conjecture, the following conjecture  was proposed by Luo and Huang  in  \cite{luh}.
 \\
\\
{\bf Gorenstein Projective Conjecture (GPC)  }\ \ {\it  Let   $\Lambda$  be    an Artin algebra.  If   $M $ is a finitely generated Gorenstein projective  $\Lambda$-module
 with   $M \in M^{\perp_{\infty}}$, then $M$ is projective}.
\begin{rem}\label{rem239}  {\rm Using \cite[Theorem 4.6]{luh}, Luo and Huang  proved in \cite[Theorem 4.7]{luh} that   (ARC)  is true for commutative Artinian rings.
However,   as noted  by B$\ddot{\rm o}$hmler and Marczinzik in \cite[page 2, line -16]{bm}, the argument in  \cite[Theorem 4.6]{luh} does not hold in general,
and a concrete counterexample is explicitly constructed   in \cite[Theorem 0.2]{bm}.
}\end{rem}

The implication   (ARC) $\Rightarrow$ (GPC) is   straightforward. Conversely, we obtain the following result.

\begin{prop}\label{prop321}   Let    $\Lambda$ be a virtually
Gorenstein Artin algebra. Then  {\rm (GPC) $\Rightarrow$ (ARC)}. \end{prop}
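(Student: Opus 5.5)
Fix $M\in$ mod-$\Lambda$ with ${\rm Ext}_{\Lambda}^{i}(M, M\oplus\Lambda)=0$ for all $i\geq 1$; we must show that $M$ is projective, assuming (GPC) for $\Lambda$. The plan has three steps: show $M$ is Gorenstein projective, show $M$ is self-orthogonal, and then apply (GPC).

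First, $M\in{}^{\perp_{\infty}}\Lambda$, so Theorem \ref{thm101} gives $M\in\mathscr{GP}$. Since $M$ is also finitely generated, it is a finitely generated Gorenstein projective module.

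Second, the hypothesis ${\rm Ext}_{\Lambda}^{i}(M,M)=0$ for all $i\geq 1$ says exactly that $M\in M^{\perp_{\infty}}$.

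Third, (GPC) applies directly to $M$ and shows that $M$ is projective. This is precisely the conclusion of (ARC) for $M$.

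The only nontrivial input is the first step, namely that semi-Gorenstein-projective modules are Gorenstein projective. Theorem \ref{thm101} supplies this for virtually Gorenstein algebras, so I expect no real obstacle. One point to check is that (GPC) is applied to the algebra $\Lambda$ itself, which is exactly the virtually Gorenstein algebra under consideration, so no passage to $\Lambda^{\rm op}$ is needed.
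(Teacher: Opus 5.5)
Your proposal is correct and is essentially the paper's own argument. Theorem \ref{thm101} turns ${\rm Ext}^{i}_{\Lambda}(M,\Lambda)=0$ for $i\geq 1$ into $M\in\mathscr{GP}$, and (GPC), applied to this finitely generated Gorenstein projective $M$ with $M\in M^{\perp_{\infty}}$, then gives that $M$ is projective.
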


 \begin{proof} Assume  $M \in$  {\rm mod}-$\Lambda$   satisfies   {\rm Ext}$_{\Lambda}^{i}(M, M\oplus\Lambda) =  0$ for all  $i \geq 1$.
 Then, since  $\Lambda$  is virtually
Gorenstein, we see from Theorem \ref{thm101}  that $M  $ is Gorenstein projective. Hence, the  validity of   (GPC)  implies that $M  $ is   projective, as desired.   \end{proof}

  Zhang showed in \cite[Theorem 3.6]{luh} that  (GPC) holds for   Artin algebras   of finite CM-type. Building on this result and Proposition \ref{prop321}, we obtain the following new conclusion.
  \begin{thm}\label{thm501} The Auslander-Reiten Conjecture holds for all virtually Gorenstein  Artin algebras  of finite CM-type.
 \end{thm}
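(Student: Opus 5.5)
The plan is to combine Proposition \ref{prop321} with the result of Zhang quoted just before the statement, namely \cite[Theorem 3.6]{luh}: (GPC) holds for every Artin algebra of finite CM-type. So let $\Lambda$ be a virtually Gorenstein Artin algebra of finite CM-type, and let $M\in$ mod-$\Lambda$ satisfy ${\rm Ext}_{\Lambda}^{i}(M, M\oplus\Lambda)=0$ for all $i\geq 1$. We must show that $M$ is projective.

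The first step uses only the vanishing ${\rm Ext}_{\Lambda}^{i}(M,\Lambda)=0$ for $i\geq1$. This says $M\in{}^{\perp_{\infty}}\Lambda$. Since $\Lambda$ is virtually Gorenstein, Theorem \ref{thm101} (equivalently Theorem \ref{thm92}(2) with $m=0$) gives that $M$ is Gorenstein projective. It is moreover finitely generated, so $M\in\mathscr{GP}\cap$ mod-$\Lambda$.

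The second step uses the remaining vanishing ${\rm Ext}_{\Lambda}^{i}(M,M)=0$ for $i\geq 1$, which says $M\in M^{\perp_{\infty}}$. Thus $M$ is a finitely generated Gorenstein projective module with $M\in M^{\perp_\infty}$, which is exactly the hypothesis of (GPC). Since $\Lambda$ has finite CM-type, (GPC) holds for $\Lambda$ by \cite[Theorem 3.6]{luh}, and therefore $M$ is projective. This is precisely the implication (GPC) $\Rightarrow$ (ARC) of Proposition \ref{prop321}, now applied to an algebra where (GPC) is known.

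The only point needing care is the cited input. One should check that \cite[Theorem 3.6]{luh} (Zhang's argument) is independent of the flawed \cite[Theorem 4.6]{luh} discussed in Remark \ref{rem239}. That is, it should genuinely prove (GPC) for Artin algebras of finite CM-type, presumably by observing that $\mathscr{GP}\cap$ mod-$\Lambda={\rm add}(G)$ for a single module $G$ and arguing with the finitely many indecomposables. If that theorem is granted, as the paper does, no further obstacle remains.
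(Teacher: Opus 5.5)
Your proposal is correct and is the paper's argument: the paper invokes \cite[Theorem 3.6]{luh} to obtain (GPC) for Artin algebras of finite CM-type and then applies Proposition \ref{prop321}. The proof of that proposition is exactly your two steps: Theorem \ref{thm101} makes $M$ Gorenstein projective, and then (GPC) makes it projective. Your caveat about the cited result is reasonable, but the paper, like you, simply takes that theorem as given.
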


\begin{proof} Let $\Lambda$ be a virtually
Gorenstein Artin algebra  of finite CM-type. Then (GPC) holds for $\Lambda$  by  \cite[Theorem 3.6]{luh}. Therefore, (ARC) holds for $\Lambda$  by
Proposition \ref{prop321}. \end{proof}

 The following theorem provides
another proof of (GSC) for virtually Gorenstein Artin algebras (see \cite[Theorem  11.4]{bel}).

\begin{thm}\label{thm102}  Let  $\Lambda$ be a virtually Gorenstein Artin algebra and  $ E_{i} $  $(i \geq 0)$ be the module as in  {\rm Notation \ref{nota1}}. Then the following quantities are identical.
\begin{itemize}
\item[$(1)$]  {\rm id$_{\Lambda}$($\Lambda$)}.
 \item[$(2)$] {\rm id$_{\Lambda^{{\rm op}}}$($\Lambda^{{\rm op}}$)}.
 \item[$(3)$]  {\rm inf\{$n \in \mathbb{N}$\ $|$\   Ext$_{\Lambda}^{n+l}(D(\Lambda^{{\rm op}}), \Lambda) =  0$ for all $l\geq 1$\}}.
 \item[$(4)$]  {\rm inf\{$n \in \mathbb{N}$\ $|$\   Ext$_{\Lambda^{{\rm op}}}^{n+l}(D(\Lambda), \Lambda^{{\rm op}}) =  0$ for all $l\geq 1$\}}.
  \item[$(5)$]  {\rm sup\{pd$_{\Lambda}$$(E_{i})$\ $|$\ $i \in \mathbb{N}$\}}.\end{itemize}\end{thm}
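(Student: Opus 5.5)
The plan is to prove $(1)=(3)$ and $(3)=(4)$ directly, get $(2)=(4)$ from $(1)=(3)$ applied to $\Lambda^{\rm op}$, and then fit $(5)$ into the chain.

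\emph{Step 1: $(3)=(4)$.} Both $D(\Lambda^{\rm op})$ and $\Lambda$ are finitely generated. For finitely generated $X,Y$ the duality $D$ gives ${\rm Ext}^i_{\Lambda}(X,Y)\cong{\rm Ext}^i_{\Lambda^{\rm op}}(D(Y),D(X))$. Taking $X=D(\Lambda^{\rm op})$ and $Y=\Lambda$ yields
$${\rm Ext}^i_{\Lambda}(D(\Lambda^{\rm op}),\Lambda)\cong{\rm Ext}^i_{\Lambda^{\rm op}}(D(\Lambda),\Lambda^{\rm op})$$
for all $i$, so the two infima agree.

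\emph{Step 2: $(1)=(3)$.} Clearly $(3)\le(1)$. Conversely, suppose $n:=(3)<\infty$. Theorem \ref{thm91}(2) gives ${\rm Gid}_\Lambda(\Lambda)\le n$, so the $n$-th cosyzygy $L_n$ in the injective resolution $(\clubsuit)$ is Gorenstein injective; this is the standard fact that cosyzygies beyond the Gorenstein injective dimension are Gorenstein injective, valid because $(^\perp\mathscr{GI},\mathscr{GI})$ is hereditary by Proposition \ref{prop1}. Next, $\Lambda\in\mathscr{GP}^\perp={}^\perp\mathscr{GI}$ by virtual Gorensteinness. The class $^\perp\mathscr{GI}$ is thick and contains the injectives, so every cosyzygy of $\Lambda$, in particular $L_n$, lies in $^\perp\mathscr{GI}$. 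Hence $L_n\in{}^\perp\mathscr{GI}\cap\mathscr{GI}$. By Proposition \ref{prop1} there is an exact sequence $0\to K\to I\to L_n\to 0$ with $I$ injective and $K\in\mathscr{GI}$. Since ${\rm Ext}^1(L_n,K)=0$, it splits, so $L_n$ is injective and ${\rm id}_\Lambda(\Lambda)\le n$. Applying the same argument to $\Lambda^{\rm op}$, which is virtually Gorenstein by \cite[Theorem 8.7]{bel}, gives $(2)=(4)$. Combined with Step 1, quantities $(1)$ through $(4)$ coincide.

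\emph{Step 3: $(5)$.} Let $m:=\sup_i{\rm pd}_\Lambda(E_i)$.

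\textbf{$m\le(1)$.} If $(1)=n<\infty$, then by Step 2 also ${\rm id}_{\Lambda^{\rm op}}(\Lambda^{\rm op})=n$, so $\Lambda$ is Gorenstein. Over a Gorenstein algebra, ${\rm pd}_\Lambda(D(\Lambda^{\rm op}))={\rm id}_{\Lambda^{\rm op}}(\Lambda^{\rm op})=n$. Each $E_i$ is a finite direct sum of summands of $D(\Lambda^{\rm op})$, hence $m\le n$.

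\textbf{$(3)\le m$.} Suppose $m<\infty$. By Remark \ref{rem29}, (GNC) holds for $\Lambda$, so every indecomposable injective occurs as a summand of some $E_i$. There are only finitely many indecomposable injectives up to isomorphism, so ${\rm pd}_\Lambda(D(\Lambda^{\rm op}))\le m$. Then ${\rm Ext}^{m+l}_\Lambda(D(\Lambda^{\rm op}),\Lambda)=0$ for all $l\ge1$, giving $(3)\le m$.

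The main obstacle is Step 2, specifically showing that $\Lambda$ and its cosyzygies lie in $^\perp\mathscr{GI}$, so that finite Gorenstein injective dimension upgrades to finite injective dimension. This is where virtual Gorensteinness is essential. Using (GNC) in Step 3 is the other non-formal input.
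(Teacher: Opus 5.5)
Your proof is correct. It matches the paper on $(3)=(4)$ (the duality isomorphism of Ext groups) and on $(3)\le(5)$ (via (GNC), which the paper obtains from Corollary~\ref{cor10}), but handles the other comparisons differently.

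\emph{Where you differ.} The paper gets $(1)=(3)$ and $(2)=(4)$ by combining Theorem~\ref{thm91}(2) with Holm's theorem ${\rm Gid}_\Lambda(\Lambda)={\rm id}_\Lambda(\Lambda)$ \cite[Theorem 2.1]{hol1}. It gets $(5)\le(3)$ by showing $(3)={\rm Gpd}_\Lambda(D(\Lambda^{\rm op}))={\rm pd}_\Lambda(D(\Lambda^{\rm op}))$, using Theorem~\ref{thm92}(2) and \cite[Theorem 2.2]{hol1}.

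You replace Holm's theorem with a direct observation: the $n$-th cosyzygy lies in $^\perp\mathscr{GI}\cap\mathscr{GI}$, and every module there is injective. You replace Theorem~\ref{thm92}(2) plus Holm's dual result with the classical equality ${\rm pd}_\Lambda D(\Lambda^{\rm op})={\rm id}_{\Lambda^{\rm op}}(\Lambda^{\rm op})$, combined with the already established $(1)=(2)$.

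Your route is more self-contained, since it needs no outside results on Gorenstein dimensions of the ring itself. The paper's route identifies $(3)$ with ${\rm pd}_\Lambda D(\Lambda^{\rm op})$ directly from the Gorenstein projective criterion, without passing through left--right symmetry.

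\emph{Three small corrections.}

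First, Proposition~\ref{prop1} only gives a special $^\perp\mathscr{GI}$-precover $0\to K\to I\to L_n\to 0$ with $I\in{}^\perp\mathscr{GI}$, not with $I$ injective. The sequence with $I$ injective and $K\in\mathscr{GI}$ comes from the definition of Gorenstein injectivity (a complete injective resolution). You genuinely need $I$ injective here. If $I$ were only in $^\perp\mathscr{GI}$, the splitting would merely show $L_n$ is a summand of a module in $^\perp\mathscr{GI}\cap\mathscr{GI}$, which is circular.

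Second, $\Lambda\in{}^\perp\mathscr{GI}$ holds simply because $\Lambda$ is projective; virtual Gorensteinness plays no role there. The hypothesis enters only through Theorem~\ref{thm91}(2) and through (GNC) in Step 3, so your closing remark misplaces it. In effect, your Step 2 reproves Holm's result in this setting.

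Third, ${\rm pd}_\Lambda D(\Lambda^{\rm op})={\rm id}_{\Lambda^{\rm op}}(\Lambda^{\rm op})$ holds over every Artin algebra by duality. So in Step 3 you need not pass through the Gorensteinness of $\Lambda$.
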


 \begin{proof} We get from Theorem \ref{thm91} that {\rm Gid$_{\Lambda}$($\Lambda$)} =   (3).
 But {\rm Gid$_{\Lambda}$($\Lambda$)} = {\rm id$_{\Lambda}$($\Lambda$)} per \cite[Theorem  2.1]{hol1}. So (1) $= (3)$.
Similarly, we have (2) $= (4)$.

Dually, we obtain (3) =   pd$_{\Lambda}$$(D(\Lambda^{{\rm op}})$) by using Theorem \ref{thm92} and  \cite[Theorem  2.2]{hol1}.
 Note that $D(\Lambda^{{\rm op}})$ is an injective cogenerator, and each $E_{i}$ is finitely generated.
 Thus $E_{i} \in$ add$(D(\Lambda^{{\rm op}}))$. These observations imply that (5) $\leq (3)$.

 Next we prove the equality (3) $= (4)$. Note that $DD(\Lambda) \cong \Lambda$  and $DD(\Lambda^{{\rm op}}) \cong \Lambda^{{\rm op}}$. Hence
${\rm Ext}_{\Lambda}^{n}(D(\Lambda^{{\rm op}}), \Lambda) \cong {\rm Ext}_{\Lambda}^{n}(D(\Lambda^{{\rm op}}), DD(\Lambda)) \cong
 D{\rm Tor}_{n}^{\Lambda}(D(\Lambda^{{\rm op}}), D(\Lambda))
 \cong      {\rm Ext}_{\Lambda^{{\rm op}}}^{n}(D(\Lambda), DD(\Lambda^{{\rm op}})) \cong      {\rm Ext}_{\Lambda^{{\rm op}}}^{n}(D(\Lambda), \Lambda^{{\rm op}})$
by  \cite[ Lemma 2.16(b)]{gbe}. This shows that (3) $= (4)$.

 It remains to argue that (3) $\leq (5)$.  We may assume {\rm sup\{pd$_{\Lambda}$$(E_{i})$\ $|$\ $i \in \mathbb{N}$\}} = $t < \infty$.
  Let    $\mathscr{S}$  be an irredundant finite set of representatives of the simple modules in Mod-$\Lambda$.
Write $G = \bigoplus_{S\in \mathscr{S} } E(S)$, where $E(S)$   is the injective envelope of    $S$.
 For any  $S\in$ $\mathscr{S}$,  we get some  $n \geq 0$ such that  ${\rm Ext}_{\Lambda}^{n}(S, \Lambda)\neq 0$
by Corollary \ref{cor10}.
 So $E(S)$ is a direct summand of the module $E_{n}$ in Notation \ref{nota1} (cf. \cite[p. 70]{ar0}), and hence {\rm pd}$_{\Lambda}$($E(S)) \leq  t$.
 Thus   pd$_{\Lambda}$$(G) \leq t$. But  $G $ is an injective cogenerator. Therefore,
  {\rm pd}$_{\Lambda}$($D(\Lambda^{{\rm op}}))$  $\leq t$, which implies   that (3) $\leq (5)$. The proof is finished.
 \end{proof}

\begin{rem}\label{rem30}  {\rm
The validity of (TC1) in the setting of virtually Gorenstein Artin algebras
can be directly deduced as a special instance of Theorem \ref{thm102}.}\end{rem}

 In view of the above discussions, it is natural to close the present paper by putting forward the following conjecture,
  which can be regarded as a   generalized extension covering   (AGC) and (TC1).
  \\
\\
{\bf Conjecture 3.20.} {\it Let  $\Lambda$ be an Artin algebra   and  $ E_{i} $  $(i \geq 0)$ be the module as in  {\rm Notation \ref{nota1}}.
 Then the following quantities are identical.
\begin{itemize}
\item[$(1)$]  {\rm id$_{\Lambda}$($\Lambda$)}.
 \item[$(2)$] {\rm inf\{$n \in \mathbb{N}$\ $|$\   Ext$_{\Lambda}^{n+l}(D(\Lambda^{{\rm op}}), \Lambda) =  0$ for all $l\geq 1$\}}.
 \item[$(3)$]  {\rm sup\{pd$_{\Lambda}$$(E_{i})$\ $|$\ $i \in \mathbb{N}$\}}.\end{itemize}}

\section*{Acknowledgments}

The author would like to thank Professor Hongxing Chen and his colleagues, as well as his students, for their contributions to this paper, which have significantly enhanced the
 quality of this work. The author is also grateful to Professor Pu Zhang  for his  helpful suggestions on this paper.

\section*{Disclosure statement}
The author reports there are no competing interests to declare. This note has no associated data.


\vskip3mm

\begin{thebibliography}{99}

\bibitem{am} J. Asensio Mayor, J. Martinez Hernandez, On flat and projective envelopes,   {\it J. Algebra } 160 (1993),  434-440.

\bibitem{and} F. W. Anderson,  K. R. Fuller,  {\it Rings and
Categories  of  Modules}, 2nd. Springer-Verlag, New York, 1992.

\bibitem{ar0}  M. Auslander, I. Reiten, On a generalized version of Nakayama conjecture, {\it Proc. Amer. Math. Soc.}  52 (1975), 69-74.

\bibitem{ar1}  M. Auslander, I. Reiten, Applications of contravariantly finite subcategories, {\it Adv. Math.} 86 (1991), 111-152.

\bibitem{ar2}  M. Auslander, I. Reiten,  $k$-Gorenstein algebras and syzygy modules, {\it J. Pure Appl.
Algebra} 92 (1994), 1-27.

\bibitem{ars} M. Auslander, I. Reiten, S. O. Smal${\o}$, {\it Representation Theory of Artin Algebras.} Cambridge Studies in Advanced Mathematics, vol. 36. Cambridge Univ.  Press, Cambridge, 1995.

\bibitem{bel} A. Beligiannis, Cohen-Macaulay modules, (co)torsion pairs and virtually Gorenstein algebras, {\it J. Algebra }  288 (2005), 137-211.

\bibitem{bel1} A. Beligiannis, On algebras of finite Cohen-Macaulay type,  {\it Adv. Math. }  226 (2011), 1973-2019.

\bibitem{bek} A. Beligiannis, H. Krause,  Thick subcategories and virtually gorenstein algebras,  {\it Ill. J. Math. }  52(2)  (2008), 551-562.

\bibitem{bm} B. B$\ddot{\rm o}$hmler, R. Marczinzik, Tor and Ext vanishing results for commutative Artinian rings,  arXiv:2608.09701v1.

\bibitem{ber} A. Beligiannis, I. Reiten,  Homological and homotopical aspects of torsion theories, {\it Mem. Amer. Math. Soc}. 188(883) (2007), 1-207.

\bibitem{cum} C. Cummings, Left-right symmetry of finite finitistic dimension, {\it Bull. Lond. Math. Soc.} 56(2) (2024),  624-633.


\bibitem{cfx} H. X. Chen, M. Fang, C. C. Xi, Tachikawa's second conjecture, derived recollements, and gendo-symmetric algebras, {\it Compos. Math.} 160(11),  (2024), 2704-2734.

\bibitem{cof} R. R. Colby, K. R. Fuller,  A note on the Nakayama Conjecture, {\it Tsukuba J. Math.}
14 (1990), 343-352.

\bibitem{chh} L. W. Christensen, H. Holm,  Algebras that satisfy Auslander's condition on vanishing
of cohomology, {\it Math. Z.} 265 (2010),  21-40.

\bibitem{chx} H. X. Chen, C. C. Xi,  Homological theory of self-orthogonal modules,  {\it Trans. Amer. Math. Soc.} 378(10) (2025),  7287-7335.

\bibitem{ch1} H. X. Chen,   C. C. Xi,  Virtually Gorenstein algebras of infinite dominant dimension, {\it J. Pure Appl.
Algebra} 230(4) (2026), 108224.

\bibitem{eno9}  E. E. Enochs, O. M. G. Jenda, Gorenstein injective and projective modules,  {\it Math. Z}. 220(4)  (1995), 611-633.

\bibitem{eno5} E. E. Enochs, O. M. G. Jenda, {\it Relative  Homological  Algebra},  Berlin,  Walter de Gruyter,
2000.


\bibitem{gii} J. Gillespie, A. Iacob, Duality pairs, generalized Gorenstein modules, and Ding injective
envelopes, {\it Comptes Rendus Math$\acute{{\rm e}}$matique}  360 (2022), 381-398.

\bibitem{gbe} R. G$\ddot{\rm o}$bel, J. Trlifaj, {\it Approximations and Endomorphism Algebras of Modules}, de Gruyter Exp. Math. vol.
41,  2 revised and extended edn. de Gruyter, Berlin, (2012).

\bibitem{hol1} H. Holm,  Rings with finite gorenstein injective dimension, {\it Proc. Amer. Math. Soc.} 132(5) (2004), 1279-1283.

\bibitem{hol} H. Holm,  Gorenstein homological dimensions. {\it J. Pure Appl. Algebra} 189 (2004), 167-193.

\bibitem{hua} Z. Y. Huang, Selforthogonal modules with finite injective dimension II, {\it J. Algebra} 264 (2003),   262-268.

\bibitem{kim} K. Kimura, Y. Mifune, Y. Otake, R. Takahashi, On strongly $G$-regular rings, arXiv:2608.08228v1.

\bibitem{li} W. Q. Li, On $G$-$(n,d)$-rings and $n$-coherent rings, {\it  Int. Electron. J. Algebra} 37 (2025), 147-178.

\bibitem{luh} R. Luo,  Z. Y. Huang,  When are torsionless modules projective? {\it J.   Algebra} 320(5), (2008), 2156-2164.

\bibitem{mao}  L. X. Mao,  N. Q. Ding,  Relative cotorsion modules and relative flat modules, {\it Comm. Algebra} 34 (2006), 2303-2317.

\bibitem{mue} B. J.  M$\ddot{\rm u}$eller, The classification of algebras by dominant dimension, {\it Canad. J. Math.} 20 (1968), 398-409.

 \bibitem{poj} P. Moradifar, J. $\check{\rm S}$aroch, J. $\check{\rm S}$$\check{\rm t}$ov$\acute{\rm {\i}}$$\check{\rm c}$ek,   Finitistic dimension conjectures via Gorenstein
projective dimension, {\it J. Algebra} 591 (2022), 15-35.

\bibitem{nak} T. Nakayama, On algebras with complete homology, {\it Abh. Math. Sem. Univ. Hamburg} 22  (1958), 300-307.

\bibitem{rz1} C. M. Ringel, P. Zhang, Gorenstein-projective and semi-Gorenstein-projective modules, {\it Algebra $\&$   Number Theory}
14(1) (2020), 1-36.

\bibitem{riz}  C. M. Ringel, P. Zhang, Gorenstein-projective modules over short local algebras. {\it J. Lond. Math. Soc.} 106(2) (2022), 528-589.

 \bibitem{ss} J. $\check{\rm S}$aroch, J. $\check{\rm S}$$\check{\rm t}$ov$\acute{\rm {\i}}$$\check{\rm c}$ek,   Singular compactness and definability for $\sum$-cotorsion and Gorenstein modules,

{\it Sel. Math. New Ser.} 26:23 (2020).

\bibitem{tac} H. Tachikawa, {\it Quasi-Frobenius rings and generalizations:  QF-{\rm 3} and QF-{\rm 1} rings}, Notes by Claus Michael Ringel. Lecture Notes in Mathematics, 351, Berlin-Heidelberg New York, 1973.

\bibitem{zh}   X. J. Zhang, A note on Gorenstein projective conjecture II, {\it Nanjing Daxue Xuebao Shuxue Bannian
Kan} 29(2) (2012), 155-162.
\end{thebibliography}
\end{document}